\newlength{\bibitemsep}\setlength{\bibitemsep}{.2\baselineskip plus .05\baselineskip minus .05\baselineskip}
\newlength{\bibparskip}\setlength{\bibparskip}{3pt} 
\let\oldthebibliography\thebibliography
\renewcommand\thebibliography[1]{%
  \oldthebibliography{#1}%
  \setlength{\parskip}{\bibitemsep}%
  \setlength{\itemsep}{\bibparskip}%
}
\theoremstyle{definition}
\newtheorem{example}{Example}[section]
\newtheorem{definition}{Definition}[section]
\theoremstyle{remark}
\newtheorem{remark}{Remark}[section]
\declaretheorem[name=Theorem,numberwithin=section, style=definition]{thm} 
\declaretheorem[name=Lemma,sharenumber=thm]{lemma}
\declaretheorem[name=Proposition,sharenumber=thm]{prop}
\newcommand{\inv}{^{-1}}
\begin{document}

\title{Completely bounded subcontexts of a Morita context of unital $C^*$-algebras}
\author{Kathryn McCormick}
\address{Dept of Math and Stats, CSU Long Beach, Long Beach, CA 98040, USA}
\email{kathryn.mccormick@csulb.edu}

\keywords{operator algebra, nonselfadjoint, homogeneous $C^*$-algebra, Morita equivalence, Morita context, Riemann surface}
\subjclass[2010]{47L25, 47L30, 47L55, 46L52}

\begin{abstract} In this paper, we answer a question of Blecher-Muhly-Paulsen pertaining to identifying topological invariants for completely bounded Morita equivalences of holomorphic cross-section algebras. Given a certain natural subcontext of a strong Morita context of $n$-homogeneous $C^*$-algebras whose spectrum $T$ is an annulus, Blecher-Muhly-Paulsen are able to estimate the norm of a lifting of the identity of a holomorphic subalgebra by a conformal invariant of the annulus and a property of the associated matrix bundle. We give a generalization of the above example in which $T$ is a bordered Riemann surface. While constructing this generalization, we develop a sufficient criterion for when a unital completely bounded Morita equivalence can be factored into a similarity and a strong Morita equivalence.
    \end{abstract}

\maketitle

\section{Introduction}
Suppose that $A$ and $B$ are $C^*$-algebras that are strongly Morita equivalent in the sense of Rieffel \cite{R1974a,Rieffel1976}. A strong Morita equivalence between $A$ and $B$ allows one to travel from a left Hilbert $A$-module $M$ to a left Hilbert $B$-module $N$ (and back) by tensoring with the appropriate Hilbert module. Strong Morita equivalence preserves many relevant properties of $C^*$-algebras \cite{AnHuef2007}, and is used as a tool in many applications, such as to transformation group $C^*$-algebras \cite{Rieffel1982,LeschMoscovici2016}.

The authors of \cite{Blecher2000}, motivated by problems in the representation theory of nonselfadjoint operator algebras, developed two types of Morita equivalence for operator algebras: \emph{strong} Morita equivalence, and the parallel notion of \emph{completely bounded} ($cb$) Morita equivalence \cite[Def.~3.1]{Blecher2000}. If the operator algebras in question are unital $C^*$-algebras, one can relate the two types of Morita equivalence by decomposing a $cb$ Morita equivalence into a strong Morita equivalence and a $cb$-isomorphism: that is, if $A$ and $B$ are unital $C^*$-algebras and if $A$ is $cb$ Morita equivalent to $B$, then $B$ is $cb$-isomorphic to a $C^*$-algebra $B'$ where $B'$ strongly Morita equivalent to $A$ \cite[Thm.~7.11]{Blecher2000}. Also see \cite{Elef2016} for another notion of Morita equivalence for operator algebras.

We wish to pursue several questions surrounding Morita equivalence for unital, not necessarily self-adjoint, operator algebras. Our first motivating question is, under what conditions can the decomposition above be achieved for a $cb$ Morita equivalence of unital operator algebras that are not $C^*$-algebras? 

For unital $C^*$-algebras, any $cb$ isomorphism is similar to a complete isometric ($ci$) isomorphism, but this is not true for unital operator algebras in general \cite[Thm.~2.6, Sec.~3, resp.]{Clouatre2015}. In \cite{Clouatre2015}, Clou\^{a}tre studies how close a $cb$ isomorphism is to factoring as a complete isometric isomorphism composed with similarities. We would like to pursue a similar line of inquiry, but for $cb$ Morita equivalences. Thus, our second question is: if we can decompose a $cb$ Morita equivalence into a $cb$ isomorphism and a strong Morita equivalence, are there cases in which we can we replace the $cb$ isomorphism with a similarity and $ci$ isomorphism?

In particular, we have been motivated to answer the above two questions in the setting of a \emph{subcontext} of a $C^*$-algebra Morita context. Let $A \subset A_1$ and $B \subset B_1$ be operator algebras containing the unit of their respective ambient $C^*$-algebras. Let $X_1$ be a $A_1$-$B_1$ operator bimodule, let $Y_1$ be a $B_1$-$A_1$ operator bimodule, and let $(\cdot, \cdot)$ and $[\cdot, \cdot]$ be $A_1$ (resp.~$B_1$)-valued pairings with sufficient conditions to produce a strong Morita context of $C^*$-algebras in the sense of \cite{Blecher2000}. If there exists such a Morita context, $A_1$ and $B_1$ are said to be \emph{strongly Morita equivalent}. In addition, suppose that there exist closed operator subspaces $X \subset X_1$, $Y \subset Y_1$, which are $A$-$B$ (resp.~$B$-$A$) submodules so that the restriction of the pairing $(\cdot,\cdot)$ (and $[\cdot,\cdot]$) maps onto $A$ (resp.~$B$) and so that $(A,B,X,Y,(\cdot,\cdot),[\cdot,\cdot])$ is a strong or $cb$ Morita context of \cite{Blecher2000}. In \cite[Def.~5.2(ii)]{Blecher1999}, such a collection $(A,B,X,Y,(\cdot,\cdot),[\cdot,\cdot])$ is called a unital subcontext of a $C^*$-context. 
A unital subcontext of a strong Morita context of $C^*$-algebras need \emph{not} be a strong Morita context in and of itself. We would like to understand better the conditions under which a strong Morita equivalence descends to the subcontext. In this paper, we provide sufficient conditions on the subcontext to guarantee that such a $cb$ Morita equivalence is similar to a strong Morita equivalence. In particular, it is enough to have \textit{compatible symmetric lifts} of $1_A$ and $1_B$ (see Definition \ref{def:CompatibleSymmetric}). Our main theorem is the following:

\begin{restatable}{thm}{mainprop} \label{mainprop} Let $(A,B,X,Y,(\cdot,\cdot),[\cdot,\cdot])$ be a unital $cb$ subcontext of a $cb$ Morita context of $C^*$-algebras $(A_1,B_1,X_1,Y_1,(\cdot,\cdot),[\cdot,\cdot])$. Further, assume that there is an algebraic lift of $1_B$ and an algebraic lift of $1_A$ which are compatible symmetric. Then there is a representation $\rho : B_1 \to B(H)$ and an invertible map $S \in B(H)$ so that $S^{-1} \rho(B)S$ is strongly Morita equivalent to $A$.
	\end{restatable}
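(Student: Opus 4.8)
The plan is to convert the two algebraic lifts into an explicit isometry between Hilbert spaces and then read off the desired equivalence as a corner of a matrix algebra over $A$. First I would fix a faithful representation of the linking $C^*$-algebra $\begin{pmatrix} A_1 & X_1 \\ Y_1 & B_1 \end{pmatrix}$ on a Hilbert space $K = K_A \oplus K_B$, so that $X_1$ and $Y_1$ become the off-diagonal corners and $Y_1 = X_1^*$. Unwinding Definition \ref{def:CompatibleSymmetric}, the compatible symmetric lifts should supply finitely many $x_1,\dots,x_n \in X$ whose adjoints $x_i^* \in Y$ realize both units at once, namely $\sum_i x_i^* x_i = 1_B$ (via the bracket pairing) and $\sum_i x_i x_i^* = 1_A$ (via the round pairing). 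These two identities are the engine of the whole argument.

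Next I would assemble the column $W = (x_1,\dots,x_n)^{\mathrm{t}} \colon K_B \to K_A^{\,n}$. The relation $\sum_i x_i^* x_i = 1_B$ says precisely that $W^*W = 1_{B_1}$, so $W$ is an isometry, and its range projection $e := WW^* = [\,(x_i,x_j^*)\,]_{i,j}$ lies in $M_n(A)$ because the subcontext pairing maps into $A$. Since $W^*W = 1$, the spatial map $\rho(b) := W b W^*$ is a $*$-homomorphism of $B_1$ into $B(K_A^{\,n}) =: B(H)$, multiplicative and $*$-preserving, with $\rho(1_{B_1}) = e$; it restricts to a completely isometric isomorphism of $B$ onto the corner $e M_n(A) e$, where one checks $x_i b x_j^* = (x_i\cdot b,\,x_j^*) \in A$ for $b \in B$. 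When the lift is exactly self-adjoint this already yields $\rho(B) = e M_n(A) e$ with $S = I$; in the general case, where a non-self-adjoint lift produces only an \emph{idempotent} $f$ similar to the projection $e$, I would invoke the principle that a unital $cb$ isomorphism of a unital $C^*$-algebra is similar to a completely isometric one to extract the invertible $S \in B(H)$ conjugating $\rho(B)$ onto the $C^*$-corner, so that $S^{-1}\rho(B)S = e M_n(A) e$.

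It then remains to show that $e M_n(A) e$ is strongly Morita equivalent to $A$. I would exhibit the equivalence bimodules $e\,M_{n,1}(A)$ and $M_{1,n}(A)\,e$ with the matrix-multiplication pairings. Because $e = e^* = e^2$ sits in the $C^*$-algebra $M_n(A_1)$, these pairings are positive in the ambient $C^*$-algebra, which is exactly what upgrades a $cb$ equivalence to a \emph{strong} one. The second lift $\sum_i x_i x_i^* = 1_A$ forces $e$ to be full in $M_n(A)$, so the standard corner argument gives $e M_n(A) e \sim_{\mathrm{SME}} M_n(A) \sim_{\mathrm{SME}} A$, completing the chain.

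The main obstacle is the interface between the algebraic and spatial pictures: guaranteeing that the idempotent coming from the lift is genuinely self-adjoint (this is the content of the symmetric hypothesis, and is what makes $W$ an \emph{isometry} rather than merely one-sided invertible), and that the induced pairings are positive rather than only completely bounded, so that the output is a strong and not just a $cb$ equivalence. A secondary technical point is packaging the two separate lifts so that a single representation $\rho$ of $B_1$ and a single similarity $S$ simultaneously implement $B \cong e M_n(A) e$ and witness fullness of $e$; if the symmetric condition holds only up to a bounded perturbation, the similarity theorem must absorb the discrepancy into $S$.
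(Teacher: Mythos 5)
Your proposal rests on a misreading of the hypothesis, and the resulting gap is fatal. Definition \ref{def:CompatibleSymmetric} does \emph{not} supply a single family $x_1,\dots,x_n \in X$ with $x_i^* \in Y$, $\sum_i [x_i^*,x_i] = 1_B$ and $\sum_i (x_i,x_i^*) = 1_A$. A symmetric lift of $1_B$ is a lift $1_B = \sum_{i=1}^k [y^i,x^i]$ in which the elements $(x^i,y^j) \in A$ have adjoints again lying in $A$; compatibility adds the same requirement for the cross-pairings $({x^i}',y^j)$ and $(x^i,{y^j}')$ with a second, generally unrelated, lift $1_A = \sum_{i=1}^{k'}({x^i}',{y^i}')$. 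Nothing forces $y^i = (x^i)^*$, and in the motivating example of Section \ref{exexample} this fails badly: there the $y_i$ are the columns of an invertible matrix $F$ of holomorphic functions and the $x_i$ are the rows of $F^{-1}$, so $x_i^*$ has anti-holomorphic entries and does not lie in $Y$ at all. Consequently your $W$ is not an isometry, $W^*W \neq 1_{B_1}$, and the matrix $P = \left( (x^i,y^j) \right)_{ij}$ is only an idempotent, not a projection. Indeed, if your identities did hold, both liftings would automatically have norm $1$, the context would already be strong by \cite[Lem.~2.8]{Blecher2000}, and the similarity $S$ in the statement would be pointless; the theorem is stated with a similarity precisely because the actual hypotheses are weaker. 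Your opening step is also unjustified: the ambient context is only a $cb$ Morita context of $C^*$-algebras, so the given pairings carry no positivity and there is no linking $C^*$-algebra built from the given data in which $Y_1 = X_1^*$; the paper notes that this identification is available only when the ambient context is a \emph{strong} one.

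Your fallback remark (``in the general case \dots invoke the similarity principle'') points in the right direction but leaves open exactly the steps where the symmetric and compatibility hypotheses do their work. The paper forms the idempotent $P \in M_k(A)$, invokes Kaplansky's theorem to produce a self-adjoint projection $Q$ built from $P$ and $P^*$ with $PQ = Q$, $QP = P$, and uses symmetry of the lift precisely to conclude $Q \in M_k(A)$ rather than merely $Q \in M_k(A_1)$; it then constructs the $cb$ isomorphism $f : B_1 \to PM_k(A_1)P$, $b \mapsto \left( (x^i,b y^j) \right)_{ij}$, applies Clou\^{a}tre's similarity lemma to $\varphi^{-1}\circ f$ to obtain $\rho$ and $S$, and finally proves $A \sim M_k(A) \sim QM_k(A)Q$ by verifying the complete-quotient-map conditions of a strong Morita context. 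For that last step, fullness of $Q$ is not enough over a nonselfadjoint algebra: one needs a norm-controlled lifting of $1_{M_k(A)}$ through $Q$ with all elements in $M_k(A)$, which the paper manufactures inside the unital $C^*$-algebra $C \subseteq A$ generated by all the pairings and their adjoints (this is where compatibility enters), following \cite[5.11(iii)]{Blecher2000} and \cite[Lem.~2.8]{Blecher2000}. None of this is recovered by your ``standard corner argument,'' and positivity of pairings is not the mechanism that upgrades $cb$ to strong equivalence in the Blecher--Muhly--Paulsen framework; complete quotient maps with controlled liftings are.
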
 
\noindent Furthermore, the norm of the map $S$ may be estimated explicitly, as described in Eqn.~\ref{Snormestimate}.

In Section \ref{exexample}, we apply Theorem \ref{mainprop} to a certain subcontext of a Morita context of two $n$-homogeneous $C^*$-algebras over a Riemann surface. We would like to motivate why this example is of interest to us, and why one would expect a theorem relating topological and bundle properties to Morita contexts, as in \cite[Ex.~8.3]{Blecher2000} and Theorem \ref{NotACIContext}. In short, we are motivated by successes in classifying $n$-homogeneous $C^*$-algebras up to $*$-isomorphism and strong Morita equivalence. The strong Morita equivalence class of an $n$-homogeneous $C^*$-algebra is determined by its Dixmier-Douady class, a topological invariant. Moreover, if we fix $n$ and consider $n$-homogeneous $C^*$-algebras up to $*$-isomorphisms that are identity on the center, the class of an algebra is determined by another finer invariant, its bundle class. One can compare the situation for $n$-homogeneous $C^*$-algebras to the case of stable, separable, continuous trace $C^*$-algebras with paracompact spectrum $T$, which are wholly determined up to spectrum-preserving strong Morita equivalence by their $*$-isomorphism\footnote{That is, $C_0(T)$-linear $*$-isomorphism} class 
\cite[5.58]{Raeburn1998}. Below, we will unpack a few more details of the relationship between the Dixmier-Douady invariant and other properties of $n$-homogeneous $C^*$-algebras. 

A unital commutative $C^*$-algebra with compact Hausdorff spectrum $T$ is $*$-isomorphic to $C(T)$. The next best non-commutative model of a unital $C^*$-algebra is the continuous cross-sections of a topological bundle, where the fibres of the bundle are a noncommutative $C^*$-algebra such as $M_n(\mathbb{C})$ or $K(H)$. In particular, any $n$-homogeneous $C^*$-algebra is $*$-isomorphic to the continuous cross sections of a matrix bundle \cite{Tomiyama1961}. 


More explicitly, any bundle over a paracompact space $T$ is determined up to continuous bundle isomorphism\footnote{Here, we assume the bundle isomorphisms are fixing the points in $T$} by its class in $H^1(T, \mathcal{A}ut(M_n\mathbb{C}))$, where $\mathcal{A}ut(M_n\mathbb{C})$ is the sheaf of continuous $PU_n(\mathbb{C})$-valued functions. By \cite{Tomiyama1961}, any unital $n$-homogenous $C^*$-algebra $A$ with spectrum a compact Hausdorff space $T$ has an associated matrix bundle $[c]$ over $T$ which determines $A$ up to $*$-isomorphism. In particular, $A$ is the collection of continuous cross-sections of $[c]$ and is $*$-isomorphic to $C(T) \otimes M_n(\mathbb{C})$ iff the associated bundle is continuously $PU_n(\mathbb{C})$-trivial.

One can define a sequence of maps of (noncommutative) cohomology sets. Let $\mathcal{S}$ be the sheaf of $\mathbb{T}$-valued functions, and let $T$ be a paracompact topological space. Then we have:

\begin{center}
\begin{tikzcd}
  H^1(T, \mathcal{A}ut(M_n\mathbb{C})) \arrow[r,"\Delta"]  & H^2(T, \mathcal{S})  \arrow[r,"\Delta^2"] &  H^3(T; \mathbb{Z}) \\
  \mbox{[c]}  \arrow[r,mapsto] & \Delta([c])  \arrow[r,mapsto]  & \Delta^2([c])=\delta([c]) 
    \end{tikzcd}
    \end{center}

where $\delta([c])$ is the Dixmier-Douady invariant of the $C^*$-algebra of cross sections of $[c]$. If we have a bundle with fibres $K(H)$, $H$ infinite-dimensional, then $\Delta$ would be a bijection; here it is simply a (well-defined) map \cite[4.83]{Raeburn1998}. By \cite{DixmierDouady} and \cite[5.32, 5.33, 5.36]{Raeburn1998}, we know that $\delta([c])=0$ iff the $C^*$-algebra associated to $[c]$ is strongly Morita equivalent to its center\footnote{Where the equivalence \emph{spectrum-preserving} or \emph{over} $T$, see \cite[Def.~5.6]{Raeburn1998}.}. That is, one can show that $\delta([c])=0$ if and only if the bundle defined by $[c]$ is an adjoint bundle \cite[4.85]{Raeburn1998}, and $[c]$ being an adjoint bundle gives a method of defining an appropriate equivalence bimodule. When we are considering bundles with finite-dimensional fibres over arbitary $T$, there do exist nontrivial principal unitary bundles\footnote{For infinite-dimensional fibres, all unitary bundles are trivial.}. So, even if $\delta([c])=0$, one could still have a bundle $[c]$ which is nontrivial. However, when $T=\overline{R}$, where $\overline{R}$ a smoothly bordered Riemann surface with finitely many boundary components, it \emph{is} true that all continuous unitary vector bundles over $\overline{R}$ are trivial\footnote{It is well known that a complex vector bundle over a circle is trivial. Then, apply the fact that $\overline{R}$ is homeomorphic to a wedge of circles.}. Therefore, we get the relationship:

 \begin{multline} \label{eqn:originaldiagram} \delta([c])=0 \overset{(a)}{\iff} [c] \text{ defines an adjoint bundle} \\ \overset{(b)}{\iff} [c] \text{ is a trivial bundle } \\ \hfill \overset{(c)}{\iff} \text{the }C^*\text{-algebra of continuous cross-sections of }[c] \\ \text{ is $*$-isomorphic to } C(\overline{R}) \otimes M_n(\mathbb{C}) 
   \end{multline} 

These four equivalent statements are automatically and trivially satisfied over $\overline{R}$.  However, if we want to refine Equation \ref{eqn:originaldiagram} to subalgebras of holomorphic cross-sections, and subcontexts of a $C^*$-algebra strong Morita context, the situation is more complicated. Suppose $[c]$ is now also a holomorphic bundle, and $A$ is a subalgebra of holomorphic cross-sections as defined in Section \ref{exexample}. We would hope to have the refinements:

\begin{multline} \label{eqn:tentativediagram} A \text{ is strongly Morita equivalent to its center }{Z}  \\ \overset{(b')}{\iff} [c] \text{ is a holomorphically }PU_n(\mathbb{C})\text{-trivial bundle} \\ \overset{(c')}{\iff} A \text{ is $ci$ isomorphic to }{Z} \otimes M_n(\mathbb{C})
    \end{multline}

Note that there do exist holomorphically $PU_n(\mathbb{C})$-nontrivial bundles over $\overline{R}$. The forward direction of $(c')$ is true, and we conjecture that the backwards direction is true, but nontrivial. See \cite[Sec.~5.2]{McCormickThesis} for a proof of the backwards direction for $n=2$ and $\overline{R}$ being an annulus. Additionally, the backwards direction of $(b')$ is true quite immediately, while the forwards direction would be expected to be more difficult to prove. Therefore, there are two roadblocks to finishing Equation \ref{eqn:tentativediagram}, which amount to recovering topological information from algebras.

In Section \ref{exexample}, we will describe a first attempt at understanding $(b')$: finding a topological obstruction for a given strong Morita equivalence of an $n$-homogeneous $C^*$-algebra and its center to descend to a strong Morita equivalence of holomorphic subalgebras. However, the topological obstruction does not lead to a complete invariant, as it does not take into account all possible operator equivalence bimodules. There remains much work to be done in classifying strong Morita equivalence bimodules for nonselfadjoint algebras; for a result classifying all singly generated strong Morita $A$-$A$ bimodules, where $A$ is a logmodular function algebra, see \cite[Thm.~7.4]{Blecher2002}.

We might also consider a weaker question: In what settings is the holomorphic section algebra $A$ $cb$ Morita equivalent to its center? For our algebras $A$, the answer is ``always,'' because $A$ is always $cb$ isomorphic to $\mathcal{Z} \otimes M_n(\mathbb{C})$. This observation led us to unpack the key properties that are used the in the proof of Theorem \ref{mainprop}.

In Section \ref{notation}, we introduce our terminology and include some simple examples; in Section \ref{results}, we prove our main theorem. 

Section \ref{exexample} is devoted to an illustrative example of the subtleties surrounding $cb$ and strong Morita equivalence, including answering a question from \cite{Blecher2000} on generalizing their Example 8.3 in Theorem \ref{NotACIContext}.  This section also includes an application of Theorem \ref{mainprop} in Proposition \ref{prop:ApplicationOfMainThm}.

\section{Preliminaries} \label{notation}
We first introduce notation and recall some of the basic definitions from \cite{Blecher2000}. Operator space terminology and background may be found in references such as \cite{Paulsen2002, Blecher2004d}.

Let $A$ and $B$ be unital operator algebras, and unital subalgebras of $C^*$-algebras $A_1$ and $B_1$, respectively. Let $X$ be a left $A$ operator module; that is, $X$ is an operator space, $A$ is an operator algebra, and the module multiplication map $a \cdot x \mapsto ax, a \in A, x \in X$ is a completely bounded bilinear map. We assume that all operator modules are \emph{essential} in the sense that $AX$ is dense in $X$. Similarly, we can define a right $A$ operator module $Y$. Suppose that, in addition to being an $A$-module, $X$ is a right $B$ operator module and is an (algebraic) $A$-$B$ bimodule; then we call $X$ an operator $A$-$B$ bimodule. Let $Y$ be a $B$-$A$ operator bimodule. We will let $(\cdot, \cdot): X \times Y \to A$ be an $B$-balanced, bilinear, completely bounded map (or pairing), and let $[\cdot, \cdot]: Y \times X \to B$ be an $A$-balanced, bilinear, completely bounded pairing. By definition, these two pairings will induce $cb$ linear maps on the Haagerup tensor products $X \otimes_h Y$ and $Y \otimes_h X$, respectively.
 
\begin{definition}[Def.~3.1 from \cite{Blecher2000}] The data $(A,B,X,Y,(\cdot,\cdot),[\cdot,\cdot])$ is a called a \textit{$cb$ Morita context} or a \textit{Morita context for $A$ and $B$} if $A$, $B$, $X$, $Y$, $(\cdot, \cdot)$, and $[\cdot, \cdot]$ are as above, and the following compatibility and norm conditions are satisfied:
\begin{itemize}
    \item $(x_1, y)x_2 = x_1[y,x_2]$ for all $x_1,x_2 \in X$, $y \in Y$;
    \item $[y_1,x]y_2=y_1(x,y_2)$ for all $y_1,y_2 \in Y$, $x \in X$;
    \item the induced map $(\cdot, \cdot ) : (X \otimes_h Y)/ker(\cdot,\cdot) \to A$ is a surjective complete quotient map with $cb$ inverse; and
    \item the induced map $[\cdot, \cdot ] : (Y \otimes_h X)/ker(\cdot,\cdot) \to B$ is a surjective complete quotient map with $cb$ inverse.
    \end{itemize}
    \end{definition}

By \cite[Lem.~2.8, Rmk.~2.10]{Blecher2000}, the bilinear pairing $[\cdot,\cdot]$ induces a completely bounded quotient map on \mbox{$Y \otimes_h X/ \text{ker}[\cdot,\cdot]$} iff there is a $cb$ \emph{lifting of $1_B$}, i.e.~for every $\epsilon >0$, there are $x^i \in X$, $y^i \in Y$ (dependent on $\epsilon$) satisfying $1_B=\sum_{i=1}^k [y^i,x^i]$ and satisfying $\|(y^1,\ldots, y^k) \|_{cb}< C+\epsilon$, $\| (x^1, \ldots, x^k)^t\|_{cb} < C+\epsilon$ for some constant $C>0$ not dependent on $\epsilon$. 
The infimum of possible constants $C$ is called the \textit{norm} of the lifting. In this paper, we will say we have an \textit{algebraic lift} or just a \textit{lift} if we want a particular representation of the identity $1_B=\sum_{i=1}^k [y^i,x^i]$ using a specific collection of $x^i \in X$, $y^i\in Y$, $1\leq i \leq k$. An algebraic lift is a lifting with $cb$ norm $\max \{ \|(y^1,\ldots, y^k) \|_{cb}, \| (x^1, \ldots, x^k)^t\|_{cb} \}$, i.e.~we could construct a $cb$-lifting by choosing for every $\epsilon>0$ the same $x^i \in X$, $y^i\in Y$.

Parallel statements may be made about $cb$ liftings of $1_A$ by reversing the roles of $X$ and $Y$. We now make two definitions that relate to the hypotheses in our main theorem.

\begin{definition} An algebraic lift of $1_B$, $1_B=\sum_{i=1}^k [y^i,x^i]$, is called \emph{symmetric} if $(x^i,y^j)^*\in A$ for all $1 \leq i,j \leq k$.
	\end{definition}

Here, we can think of the adjoint $*$ as being taken in the ambient $C^*$-algebra $A_1$. If the context $(A=A_1,B=B_1,X=X_1,Y=Y_1,(\cdot,\cdot),[\cdot,\cdot])$ were a strong Morita context of $C^*$-algebras, then $Y_1$ would be completely linearly isometric to $X_1^*$ and $X_1$ would be completely linearly isometric to $Y_1^*$. Making the latter identifications, a sufficient condition for algebraic lift of $1_{B_1}$ to be symmetric would be that ${{x^i}}^* \in Y_1$ and ${{y^i}}^* \in X_1$ for every $i$. 

\begin{definition} \label{def:CompatibleSymmetric} Suppose that there are lifts $1_B=\sum_{i=1}^k [y^i,x^i]$ and $1_A=\sum_{i=1}^{k'} ({x^i}',{y^i}')$ where the lift of $1_B$ is symmetric. Then the pair of lifts is called $A$-\emph{compatible symmetric} if $({x^i}',y^j)^* \in A$ and $(x^i,{y^j}')^* \in A$ for all $i,j$. 
	\end{definition}

As a consequence of Definition \ref{def:CompatibleSymmetric}, given lifts $1_B$ and $1_A$ that are $A$-compatible symmetric, then the unital $C^*$-algebra generated by $1_A$ and the elements $(x^i,y^j)$, $({x^i}',y^j)$, $(x_i,{y^j}')$, $(x^i,y^j)^*$, $({x^i}',y^j)^*$, $(x_i,{y^j}')^*$, $1 \leq i,j \leq k$, will be a subset of $A$.

\begin{example} \label{ex:mainexample} Let $A:=A(\mathbb{D})$ be the disk algebra, i.e.~the collection of continuous functions from the closed unit disk $\mathbb{D}$ in the complex plane into the complex numbers, $\mathbb{C}$. Let $B:=M_n(A(\mathbb{D}))$, $X:=R_n(A(\mathbb{D}))$, and $Y:=C_n(A(\mathbb{D}))$, where $M_n$, $R_n$, and $C_n$ are the complex $n \times n$ matrices, $1 \times n$ row vectors, and $n \times 1$ column vectors, respectively. The algebra $A(\mathbb{D})$ inherits a unital operator algebra structure from being a subset of $C(\mathbb{D})$, the algebra of continuous functions, and the spaces $B$, $X$, and $Y$ inherit the natural operator algebra or operator space structure from $A$. If we let the bilinear pairings $(\cdot,\cdot)$ and $[\cdot,\cdot]$ denote matrix multiplication, then we have both an algebraic and strong Morita context $(A,B,X,Y,(\cdot,\cdot),[\cdot,\cdot])$. One algebraic lift of $1_A$ is given by $x^1:=\begin{pmatrix} 1 & 0 & 0 & \cdots & 0 \end{pmatrix}=(e_1)^t$, $y^1:=\begin{pmatrix} 1 \\ 0 \\ 0 \\ \vdots \\ 0 \end{pmatrix}=e_1$. One algebraic lift of $1_B$ is given by the collections $\{y^i \}=\{e_1,e_2,e_3,\ldots, e_n \}$, $\{ x^i\}=\{(e_1)^t,(e_2)^t,(e_3)^t,\ldots, (e_n)^t \}$. The lift of $1_A$ is symmetric trivially, the lift of $1_B$ is symmetric because the adjoint acts trivially on the diagonal matrix units, which belong to $M_n(A(\mathbb{D}))$. The pair of lifts is compatible symmetric because any of the combinations $({x^i}',y^j)^*$ and $(x^i,{y^j}')^*$ result in either the constant function 0 or 1, and thus belong to $A(\mathbb{D})$. The norm of the given algebraic lift of $1_A$ is 1, and the norm of the algebraic lift of $1_B$ is also 1, so this algebraic lift also gives us a $cb$ lifting of norm 1, producing a strong Morita context.
    \end{example}

\begin{example} \label{ex:C*-inclusionexample}
Kodaka and Teruya \cite[Def.~2.1]{KodakaTeruya2018} develop the notion of \emph{strong Morita equivalence for an inclusion of $C^*$-algebras}, with an eye towards analyzing conditional expecations. We will restrict to the setting of unital $C^*$-algebras, and compare their notion to a unital subcontext of two unital $C^*$-algebras. We will translate some of their definition to the language of Morita contexts and lifts.

For a strong Morita equivalence of a unital inclusion of unital $C^*$-algebras $A \subset A_1$, $B\subset B_1$, they require the following. First, they require an $A_1-B_1$ equivalence bimodule $X_1$ with closed subspace $X$ that is an $A-B$ submodule. The equivalence bimodule $X_1$ naturally induces a $B_1-A_1$ operator bimodule $Y_1=X_1^*$ (i.e., $X$ equipped with conjugate actions of $A_1$ and $B_1$) with closed $B-A$ submodule $X^*$. An equivalence bimodule comes with associated sesquilinear maps $_{A_1}\langle \rangle, \langle \rangle_{B_1}$, and we may identify these with pairings, $_{A_1}\langle x, y\rangle = (x,y^*)$ and $\langle x, y\rangle_{B_1} = [x^*,y]$. Moreover, they require that the pairings restricted to $X$ and $X^*$ ($X^*$ and $X$) map onto $A$ (and $B$). Since $A$ and $B$ are $C^*$-algebras, this will force any algebraic lifts of $1_A$ and $1_B$ to be symmetric, and all pairs of lifts to be compatible symmetric. Lastly, they ask that $(X_1,X^*)=A_1$ and $[X_1^*,X]=B_1$, another generating condition. If one assumes that there does exist an algebraic lift of $1_A$ and of $1_B$ (such as when $X$ is of finite type), this last condition will automatically be fulfilled.
    \end{example}

For the readers who are familiar with the $C^*$-algebra setting, it may be relevant to note that a strong Morita equivalence between general operator algebras $A$ and $B$ as in \cite{Blecher2000} is \emph{not} equivalent to stable isomorphism $A \otimes \mathcal{K} \simeq B \otimes \mathcal{K}$ \cite[Ex.~8.2]{Blecher2000}, as it is for $\sigma$-unital $C^*$-algebras \cite{BrownGreenRieffel1977}. \cite{Elef2016} has developed the notion of strong $\Delta$-equivalence of operator algebras, and strong $\Delta$-equivalence is satisfied iff the algebras are stably isomorphic, and has the additional property that strong $\Delta$-equivalence implies strong Morita equivalence (when assuming a contractive approximate identity).

\begin{remark} Most of the hypotheses to Theorem \ref{mainprop} are conditions that are intrinsic to the Morita context $(A,B,X,Y,(\cdot,\cdot),[\cdot,\cdot])$. For example, this is true of conditions of having a pair of compatible symmetric lifts. However, the first hypothesis is that  $(A,B,X,Y,(\cdot,\cdot),[\cdot,\cdot])$ is a subcontext of a $cb$ Morita context of $C^*$-algebras, which is an extrinsic condition. There are times in which one can lift a Morita context of operator algebras to a Morita context of ambient $C^*$-algebras, which would eliminate the need for this second hypothesis. In particular, \cite[Thm.~4]{Blecher1999a} accomplishes such a lifting when applied to unital operator algebras $A$ and $B$ contained in their unital $C^*$-envelopes $A_1$ and $B_1$ -- but only in the setting of a \textit{strong} Morita context of operator algebras, rather than a $cb$ context. 
	\end{remark}

\section{Results} \label{results}

In \cite[7.11]{Blecher2000}, the authors show that if $(A,B,X,Y,(\cdot,\cdot), [\cdot,\cdot])$ is a $cb$ Morita context where $A$ is a unital $C^*$-algebra and $B$ is a unital operator algebra, then $B$ is similar to a $C^*$-algebra that is strongly Morita equivalent to $A$. The main idea of the proof of Theorem \ref{mainprop} is to follow this outline as much as reasonably possible, given that our $A$ is not a $C^*$-algebra, and to use of the condition of having a pair of compatible symmetric lifts as a replacement for a $C^*$-algebra at certain points. 

We will start by stating the following lemma, which has been known in the literature.

\begin{lemma} \label{lem:idempotent-from-lifts} Let $1_B=\sum_{i=1}^k [y^i,x^i]$ where $x^i \in X$, $y^i\in Y$, $1\leq i \leq k$ be an algebraic lift of $1_B$. Then the operator $P=\begin{pmatrix} (x^i,y^j)_A \end{pmatrix}_{1 \leq i,j \leq k} \in M_k(A)$ is an idempotent.
    \end{lemma}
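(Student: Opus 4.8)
The plan is to show directly that $P^2 = P$ by computing the $(i,\ell)$-entry of $P^2$ and verifying that it equals $(x^i, y^\ell)$, the $(i,\ell)$-entry of $P$. The $(i,\ell)$-entry of $P^2$ is the product $\sum_{j=1}^k (x^i, y^j)(x^j, y^\ell)$ taken in $A$. The two ingredients I would use are the left $A$-linearity of the pairing $(\cdot,\cdot)$ in its first variable, namely $(ax, y) = a(x,y)$ for $a \in A$, $x \in X$, $y \in Y$, together with the first compatibility relation $(x_1, y)x_2 = x_1[y,x_2]$ from the definition of a $cb$ Morita context.

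First I would rewrite each summand using $A$-linearity: since $(x^i, y^j) \in A$ and $x^j \in X$, we have $(x^i, y^j)(x^j, y^\ell) = \big((x^i, y^j)x^j,\, y^\ell\big)$, where $(x^i,y^j)x^j \in X$ denotes the left module action. Next I would apply the compatibility relation with $x_1 = x^i$, $y = y^j$, $x_2 = x^j$ to obtain $(x^i, y^j)x^j = x^i[y^j, x^j]$. Summing over $j$ and using bilinearity of the pairing gives $\sum_{j=1}^k (x^i, y^j)(x^j, y^\ell) = \big(\sum_{j=1}^k x^i[y^j,x^j],\, y^\ell\big) = \big(x^i \sum_{j=1}^k [y^j, x^j],\, y^\ell\big)$. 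Finally, invoking the hypothesis that $1_B = \sum_{j=1}^k [y^j, x^j]$ is an algebraic lift, the inner sum collapses to $x^i \cdot 1_B = x^i$, so the $(i,\ell)$-entry of $P^2$ equals $(x^i, y^\ell)$, which is exactly the $(i,\ell)$-entry of $P$. Hence $P^2 = P$.

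The computation is short, and I do not anticipate a genuine obstacle, since every step follows from the axioms of a $cb$ Morita context. The only points requiring care are bookkeeping the several module actions and confirming that each manipulation respects the bimodule structure: in particular, that the left $A$-action interacts correctly with the pairing (so that pulling the scalar $(x^i,y^j) \in A$ into the first slot is legitimate) and that $X$ is a unital right $B$-module, so that the collapse $x^i \cdot 1_B = x^i$ is valid. Both are standard conventions for operator modules over unital operator algebras.
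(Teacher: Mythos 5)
Your proposal is correct and follows the paper's proof essentially verbatim: both compute the $(i,\ell)$-entry of $P^2$, use left $A$-linearity of the pairing to absorb $(x^i,y^j)$ into the first slot, apply the compatibility relation $(x_1,y)x_2 = x_1[y,x_2]$, and collapse $\sum_j [y^j,x^j]$ to $1_B$. No differences worth noting.
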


\begin{proof} Consider the computation
\begin{align*} \begin{pmatrix} (x^i,y^j)_A \end{pmatrix}_{1 \leq i,j \leq k} \cdot \begin{pmatrix} (x^i,y^j)_A \end{pmatrix}_{1 \leq i,j \leq k} & =  \begin{pmatrix} \sum_{j} (x^i,y^j)(x^j,y^l) \end{pmatrix}_{1 \leq i,l \leq k} \\
& = \begin{pmatrix} ( \sum_{j} (x^i,y^j)x^j,y^l) \end{pmatrix}_{1 \leq i,l \leq k} \\
 & = \begin{pmatrix} ( \sum_{j} x^i[y^j,x^j],y^l) \end{pmatrix}_{1 \leq i,l \leq k} \\
 &= \begin{pmatrix} (x^i,y^l)_A \end{pmatrix}_{1 \leq i,l \leq k}
	\end{align*}
    \end{proof}

\mainprop*

\begin{proof}[Proof of Theorem \ref{mainprop}] We start by following the general outline of \cite[7.11]{Blecher2000}, but keeping track of the subalgebra $A$.
Since $A$ is $cb$ Morita equivalent to $B$ and we have assumed the existence of a pair of compatible symmetric lifts, there is a symmetric lift of the identity \mbox{$1_B=\sum_{i=1}^k [y^i,x^i]$} where $y^i \in Y$, $x^i \in X$ for every $i=1,\ldots, k$, and which is compatible with a lift $1_A=\sum_{i=1}^{k'} ({x^i}',{y^i}')$. Note that $1_B=\sum_{i=1}^k [y^i,x^i]$ is also an algebraic lift of the identity for the Morita context for $A_1$ and $B_1$. Form an idempotent element $P \in M_k(A) \subset M_k(A_1)$ by setting
\[ P=\begin{pmatrix} (x^i,y^j)_A \end{pmatrix}_{1 \leq i,j \leq k} \] as in Lemma \ref{lem:idempotent-from-lifts}.

Since $M_k(A_1)$ is a $C^*$-algebra, by \cite[Thm.~26]{Kaplansky1968} there exists a self-adjoint projection $Q \in M_k(A_1)$ so that $PM_kA_1=QM_kA_1$ and $PQ=Q$, $QP=P$. In particular, one may take $Q=PP^*[1+(P-P^*)(P^*-P)]$.  Our assumption that the lift of $1_B$ is symmetric implies $Q \in M_k(A)$ as well. Further, by \cite[Thm.~15]{Kaplansky1968}, the map 
\[ \varphi: Q M_k(A_1)Q \to P M_k(A_1)P \]
given by $\varphi(x)= xP$ is a surjective, unital ring isomorphism, with $\| \varphi \|_{cb}=\|P\|$. Also $\varphi^{-1} : P M_k(A_1)P \to QM_k(A_1)Q$ is given by $\varphi^{-1}(x)=xQ$, and $\|\varphi^{-1}\|_{cb}=\|Q\|$.

We will next show that $B_1$ is unitally $cb$ isomorphic to $QM_k(A_1)Q$ via a map \\ \mbox{$f^{-1} \circ \varphi : QM_k(A_1)Q \to B_1$.} Define

\begin{align*} f : & B_1 \to PM_k(A_1)P \\ 
					& b \mapsto \left( (x^i,by^j) \right)_{1 \leq i,j \leq k}\end{align*}
The function $f$ is well-defined since clearly $f(b) \in M_k(A_1)$ and
\begin{align*} Pf(b)P = \left( (x^i,y^j) \right)_{ij} \left( (x^i,by^j) \right)_{ij} \left( (x^i,y^j) \right)_{ij} & = \left( \sum_j (x^i,y^j)(x^j,by^l) \right)_{il} \left( (x^i,y^j) \right)_{ij} \\
	& = \left( (\sum_j(x^i,y^j)x^j,by^l) \right)_{il} \left( (x^i,y^j) \right)_{ij} \\
	&= \left( (x^i,by^l) \right)_{il}  \left( (x^i,y^j) \right)_{ij}\\
	&= \left( \sum_j (x^ib,y^j)(x^j,y^l) \right)_{il} \\
	&= \left( (x^ib,y^l) \right)_{il}
	\end{align*}

We have that $f$ is surjective since if $\left( a_{ij} \right) \in M_k A_1$, then we can rewrite $P(a_{ij})P$ as:
\begin{align*} \left( (x^i,y^j) \right) \left( a_{jl} \right) \left( (x^l,y^m) \right) = \left( \sum_j (x^i,y^ja_{jl}) \right) \left( (x^l, y^m) \right) & = \left( \sum_l \sum_j (x^i,y^la_{jl})(x^l,y^m) \right) \\
	& = \left( \sum_k \sum_j (x^i y^ja_{jl}(x^l,y^m)) \right) \\
	& = \left( \sum_k \sum_j (x^i, [y^ja_{jl},x^l]y^m \right) \\
	& = \left( (x^i, (\sum_k \sum_j [y^ja_{jl},x^l])y^m) \right)
	\end{align*}
and $\sum_k \sum_j [y^ja_{jl},x^l] \in B_1$. Furthermore, note that this computation demonstrates that if $a_{ij} \in A$ for all $i,j$, then $P \left( a_{ij} \right) P \in M_k(A)$.

Since $(\cdot,\cdot)_{A_1}$ is completely bounded with, say, $cb$ norm $C$, and the $B_1$-module multiplication on $Y_1$ is completely bounded with $cb$ norm $K$ we have
\[ \left\| \left( (x^i,by^j) \right)_{i,j} \right \| \leq C \left\| \begin{pmatrix} x^1  \\ x^2 \\ \vdots \\ x^k  \end{pmatrix}
 \right\| \left\| \begin{pmatrix} y^1 & y^2 & \cdots & y^k  \end{pmatrix} \right\| {K} \|b\|  \]
Similarly, the map $f^{-1}$ is completely bounded. 

Since $QM_k(A_1)Q$ is a $C^*$-algebra, we have that there is a faithful $*$-representation $\rho : B_1 \to B(H_\rho)$ and an invertible operator $S \in B(H_\rho)$ so that $S\rho(\varphi^{-1}(f(\cdot)))S^{-1}: QM_k(A_1)Q \to S \rho(B_1) S^{-1}$ is a $*$-isomorphism \cite[Lem.~2.5]{Clouatre2015}. Further, \begin{equation} \label{Snormestimate} \|S\|^2=\|S^{-1}\|^2= \|\varphi^{-1} \circ f\|_{cb} \leq \|Q\| CK \|(x^1,x^2,\ldots, x^k)\|_{cb} \|(y^1,y^2,\ldots, y^k)^t\|_{cb}. \end{equation}
	
Before we saw that the image of $B$ in $PM_k(A_1)P$ is $PM_k(A)P$, i.e., $f(B)=PM_k(A)P$; and so, $B=f^{-1}(\varphi(QM_k(A)Q))$. Therefore, to show that $A$ is strongly Morita equivalent to $S\rho(B)S^{-1}$, it is enough to show that $A$ is strongly Morita equivalent to $QM_k(A)Q$. However, we first show that $S\rho(B_1)S^{-1}$ is Morita equivalent to $A_1$ via a two-step strong Morita equivalence:
\[ A_1 \sim M_k(A_1) \sim Q M_k(A_1) Q \simeq S\rho(B_1)S^{-1} \] 

The first strong Morita equivalence is clear, see \cite[3.4]{Blecher2000}. For the second, one would only need to check that $M_k(A_1)QM_k(A_1)=M_k(A_1)PM_k(A_1)$ is a dense ideal in $M_k(A_1)$ \cite[Lem.~1.1]{Brown1977b}.

Given $(a_{ij}), (b_{ij}) \in M_k(A_1)$, we have that
\[ (b_{ij}) \left( (x^i,y^i) \right) (a_{ij})  = \left( \sum_i \sum_j b_{li}(x^i,y^j)a_{jm}  \right)_{lm} = \left( \sum_i \sum_j (b_{li}x^i,y^ja_{jm})  \right)_{lm} \]
One observes that any $x \in X_1$ is an $A_1$-linear combination of the $x^i$, since $x = x \cdot 1_B = \sum_{j=1}^k x [y^j, x^j] = \sum_{j=1}^k (x,y^j)x^j$. Similarly, any $y \in Y_1$ is an $A_1$-linear combination of the $y^j$. Since one may get any $x \in X$ from $\sum_i b_{li}x^i$, and any $y \in Y$ from $\sum_j y^j a_{jm}$, and the pairing $(\cdot, \cdot)$ maps onto $A_1$, we have that by taking linear combinations, $M_k(A_1)PM_k(A_1)=M_k(A_1)$.

Finally, we will prove that by restricting the above Morita equivalences, we have strong Morita equivalences:
\[ A \sim M_k(A) \sim QM_k(A)Q \]

The first strong Morita equivalence is done in \cite[3.4]{Blecher2000}, so we concentrate on the second. As before, note that $Q \in M_k(A)$. We will check that the multiplication maps $QM_k(A) \otimes M_k(A)Q \to QM_k(A)Q$ and $M_k(A)Q \otimes Q M_k(A) \to M_k(A)$ are complete quotient maps. It is sufficient to (a) show the second map is onto, (b) find a norm 1 lifting of $1_{QM_k(A)Q}$, and (c) demonstrate the hypotheses of \cite[Lem.~2.8]{Blecher2000} are satisfied for the second quotient map.

For part (a), we follow an argument analogous to the above. We have that $M_k(A)QM_k(A)=M_k(A)PM_k(A)=M_k(A)$ as well; indeed, one only needs to replace $A$ for $A_1$, $B$ for $B_1$, $X$ for $X_1$ and $Y$ for $Y_1$ since the pairings $(\cdot,\cdot)$ and $[\cdot,\cdot]$ restrict to onto maps.

For part (b), set $X:=M_k(A)Q$ and $Y:=QM_k(A)$. Let $e_i$ be the matrix in $M_k(A)$ with the $(1,i)$ entry being $1_A$ and the rest of the entries 0, and set $x_i':=e_i^t Q$ and $y_i':=Q e_i$. Then $\|(y_1',y_2',\ldots, y_k')\|_{cb}=1$, $\|(x_1',x_2', \ldots, x_k')^t\|_{cb}=1$, and $\sum y_i'x_i'=Q=1_{QM_k(A)Q}$ which gives a norm 1 lifting of $1_{QM_k(A)Q}$. 
We had that $1_B= \sum_{i=1}^k [y^i,x^i]$ and $1_A= \sum_{i=1}^{k'} ({x^i}',{y^i}')$ are symmetric compatible lifts and so $(x^i,y^j)^*, ({x^i}',y^j)^*, (x_i,{y^j}')^* \in A$. Thus the unital $C^*$-algebra $C$ generated by $(x^i,y^j), ({x^i}',y^j), (x_i,{y^j}'), (x^i,y^j)^*, ({x^i}',y^j)^*, (x_i,{y^j}')^*$, and $1_A$ is a subset of $A$. We can see that $M_k(C)QM_k(C)$ contains $1_{M_k(C)}=1_{M_k(A)}$ since
\[ 1_C = 1_A = \sum_{i=1}^{k'} ({x^i}',{y^i}')
	\]
and any ${x^i}'\in X$ is an $A$-linear combination of the $x^i$ (and similarly for the ${y^i}'$). Thus $Q$ is a full projection in $C$. We may then follow the argument of \cite[5.11(iii)]{Blecher2000} to see that for every $\epsilon >0$, there are elements $F_n \in M_k(C)Q$, $1\leq n \leq k'$, with $\|(F_1,F_2,\ldots,F_{k'})\|_{cb}=1=\|(F_1^*,F_2^*,\ldots, F_{k'}^*)^t\|$ and \[1_{M_k(A)}=1_{M_k(C)}=\sum_{n=1}^{k'} F_n Q F_n^*\] Because $F_n,F_n^* \in M_k(A)$, \cite[Lem.~2.8]{Blecher2000} implies that $M_k(A)Q \otimes QM_k(A) \to M_k(A)$ is a complete quotient map.
	\end{proof}

 

\section{An extended example} \label{exexample}


Fix $R$ to be an open Riemann surface with smooth boundary $\partial R$ consisting of finitely many smooth boundary components, and such that $\overline{R}=R \cup \partial R$. Consider the classical function algebra $\mathcal{A}_1:= C(\overline{R})$, that is, the collection of continuous functions defined on $\overline{R}$ with values in $\mathbb{C}$, and $\mathcal{A}:=A(\overline{R})$, the subset of $C(\overline{R})$ consisting of continuous functions which are holomorphic on $R$.

Let $\widetilde{D}$ be the universal covering space of $\overline{R}$, and let $D$ be the universal covering space of $R$, realized as a dense subset of $\widetilde{D}$. Fix a representation $\rho : \pi_1(R) \to U_n(\mathbb{C})$, where $U_n(\mathbb{C})$ denotes the $n \times n$ unitary matrices over $\mathbb{C}$. Let $A_1$ be the algebra of bounded continuous functions $f: \widetilde{D} \to \mathbb{C}$ where $f(\mathfrak{z}\cdot g)=f(\mathfrak{z})$ for all $\mathfrak{z} \in \widetilde{D}$, $g \in \pi_1(R)$. Let $\mathcal{A}$ be the algebra of bounded continuous holomorphic functions $f: \widetilde{D} \to \mathbb{C}$, so that each $f \in \mathcal{A}$ is invariant under the action of $\pi_1(R)$: i.e.~$f(\mathfrak{z}\cdot g)=f(\mathfrak{z})$ for all $\mathfrak{z} \in \widetilde{D}$, $g \in \pi_1(R)$. Let $\mathcal{B}_1$ be the algebra of $n \times n$ matrices $F=(f_{ij})_{ij}$ such that each entry $f_{ij}$ is a bounded continuous function on $\widetilde{D}$, and such that $F$ is equivariant under the conjugation action of $\rho(\pi_1(R))$: i.e.~$F(\mathfrak{z}\cdot g)=\rho(g)\inv F(\mathfrak{z}) \rho(g)$ for all $\mathfrak{z} \in \widetilde{D}$, $g \in \pi_1(R)$. Let $\mathcal{B} \subseteq \mathcal{B}_1$ be the subalgebra of $F$ so that the entries $f_{ij}$ are bounded continuous holomorphic functions. Define $X_1$ to be the set of $1 \times n$ vectors $H=(f_i)_i$ of bounded continuous functions $f_i$ where the vector $H$ is equivariant under the action of right multiplication by $\rho(\pi_1(R))$, i.e. $H(\mathfrak{z} \cdot g)=H(\mathfrak{z})\rho(g)$ for all $g \in \pi_1(R)$. Let $X$ be the corresponding subspace where the $f_i$ are continuous holomorphic functions. Let $Y_1$ be the set of $n\times 1$ vectors $H'=(f_i)_i^t$ of bounded continuous holomorphic functions $f_i$ on $\widetilde{D}$ such that $H'(\mathfrak{z} \cdot g)=\rho(g)^{-1}H'(\mathfrak{z})$ for all $g \in \pi_1(R)$, and let $Y$ be the subspace where the $f_i$ are continuous holomorphic. 

The spaces $\mathcal{A}$, $\mathcal{B}$, $X$, and $Y$ all have natural operator space structures: Any element of one of these spaces is a matrix of continuous holomorphic functions. It is enough to specify that the norm of an entry of a matrix will be the supremum norm on this function. The norm of a matrix concomitant will be finite because the concomitant condition guarantees that $\| \cdot \|$ descends to a subharmonic function on $\overline{R}$. Once we have defined the norm on any of $\mathcal{A}$, $\mathcal{B}$, $X$, $Y$, there is a canonical way to obtain a norm on $M_n(\mathcal{A})$, $M_n(\mathcal{B})$, $M_n(X)$, $M_n(Y)$. 
 
At certain points in the proof, it is useful to identify the algebra $\mathcal{B}$ with the algebra of continuous holomorphic sections of a flat matrix bundle over $\overline{R}$. To make this identification, we first note the correspondence between the conjugation action of a unitary matrix $\rho(g)$ and the action of a $*$-algebra automorphism in $PU_n(\mathbb{C})$. Then apply, for example, \cite[4.8.1]{Husemoller1994}, using a similar argument to that of \cite[Sec.~2]{GriesenauerMuhlySolel2018}. In particular, a concomitant $F: \widetilde{D} \to M_n(\mathbb{C})$ is identified with a section $\sigma: \overline{R} \to E$ of $\mathfrak{E}_\rho(\overline{R})$, where $E =\widetilde{D} \times_{Ad(\rho)} M_n(\mathbb{C})$. 

The (prospective) $cb$ Morita context $(\mathcal{A}_1,\mathcal{B}_1, X_1,Y_1, (\cdot,\cdot)_1, [\cdot,\cdot]_1)$ and its $cb$-subcontext $(\mathcal{A},\mathcal{B}, X,Y, (\cdot,\cdot), [\cdot,\cdot])$ are described as follows. The bilinear pairings $(\cdot,\cdot)_1: X_1 \times Y_1 \to \mathcal{A}$ and $[\cdot,\cdot]_1 : Y_1 \times X_1 \to \mathcal{B}$ are defined by pointwise matrix multiplication of sections, and the pairings $(\cdot,\cdot)$ and $[\cdot,\cdot]$ are restrictions of the first two pairings, respectively. These two multiplications of $x \in X_1$ and $y \in Y_1$ have the correct ranges because if $a \in \pi_1(R)$, then $x(\mathfrak{z} \cdot a) y(\mathfrak{z} \cdot a)= x(\mathfrak{z}) \rho(a) \rho^{-1}(a) y(\mathfrak{z}) = x(\mathfrak{z}) y(\mathfrak{z})$ and $y(\mathfrak{z} \cdot a) x(\mathfrak{z} \cdot a) = \rho^{-1}(a) y(\mathfrak{z}) x(\mathfrak{z}) \rho(a)$. 

We will show that the contexts $(\mathcal{A}_1,\mathcal{B}_1, X_1,Y_1, (\cdot,\cdot)_1, [\cdot,\cdot]_1)$ and $(\mathcal{A},\mathcal{B}, X,Y, (\cdot,\cdot), [\cdot,\cdot])$, are algebraic Morita contexts and $cb$-Morita contexts. We will also show $(\mathcal{A},\mathcal{B}, X,Y, (\cdot,\cdot), [\cdot,\cdot])$ is not a strong Morita context under additional topological assumptions on $\mathcal{A}, \mathcal{B}, X,Y$.

\begin{lemma} Let $\rho : \pi_1(R) \to U_n(\mathbb{C})$ be any representation. Then the algebra $\mathcal{B}_1$ is $C^*$-isomorphic to $C(\overline{R}) \otimes M_n(\mathbb{C})$.
    \end{lemma}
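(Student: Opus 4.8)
The plan is to realize $\mathcal{B}_1$ as the $C^*$-algebra of continuous sections of an endomorphism bundle and then to show that the underlying vector bundle is trivial, so that the endomorphism bundle is the trivial $M_n(\mathbb{C})$-bundle. Concretely, since $\rho : \pi_1(R) \to U_n(\mathbb{C})$ is a unitary representation, it determines a rank-$n$ Hermitian vector bundle $V_\rho := \widetilde{D} \times_\rho \mathbb{C}^n$ over $\overline{R}$, where $\pi_1(R)$ acts on $\mathbb{C}^n$ in the standard way through $\rho$. The conjugation action $\mathrm{Ad}(\rho)$ is exactly the induced action on $\mathrm{End}(\mathbb{C}^n) = M_n(\mathbb{C})$, so the bundle $\mathfrak{E}_\rho(\overline{R})$ whose sections are identified with $\mathcal{B}_1$ (as already noted in the discussion preceding this lemma, with $E = \widetilde{D} \times_{\mathrm{Ad}(\rho)} M_n(\mathbb{C})$) is the endomorphism bundle $\mathrm{End}(V_\rho) \cong V_\rho \otimes V_\rho^*$. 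Thus $\mathcal{B}_1 \cong \Gamma(\mathrm{End}(V_\rho))$, with pointwise multiplication, pointwise adjoint taken with respect to the Hermitian metric, and the supremum norm.

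The key geometric step is to prove that $V_\rho$ is trivial as a complex vector bundle over $\overline{R}$. Here I would use the hypothesis that $\overline{R}$ is a smoothly bordered Riemann surface with finitely many boundary components: such a (connected) surface-with-boundary deformation retracts onto its spine, a finite wedge of circles. Since $U_n(\mathbb{C})$ is connected we have $\pi_0(U_n(\mathbb{C})) = 0$, so every complex vector bundle over $S^1$ is trivial (equivalently, $\pi_1(BU_n) = \pi_0(U_n) = 0$, so every map from a $1$-complex into the simply connected space $BU_n$ is null-homotopic). Assembling the trivial pieces over a wedge of circles and invoking homotopy invariance of vector bundles shows that $V_\rho$ is trivial over $\overline{R}$. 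Concretely, this produces a continuous unitary frame, i.e.\ an isomorphism of Hermitian bundles $\Phi : V_\rho \xrightarrow{\ \cong\ } \overline{R} \times \mathbb{C}^n$.

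Conjugating by $\Phi$ transports the endomorphism bundle to the trivial one, $\mathrm{Ad}(\Phi) : \mathrm{End}(V_\rho) \xrightarrow{\ \cong\ } \overline{R} \times M_n(\mathbb{C})$, and taking continuous sections gives a linear multiplicative bijection
\[ \mathcal{B}_1 \cong \Gamma(\mathrm{End}(V_\rho)) \xrightarrow{\ \cong\ } \Gamma(\overline{R} \times M_n(\mathbb{C})) = C(\overline{R}) \otimes M_n(\mathbb{C}). \]
Because $\Phi$ is unitary at each fibre, the fibrewise adjoint (which is precisely the $*$-operation on $\mathcal{B}_1$) is carried to the pointwise matrix adjoint, so the map is a $*$-homomorphism; it is isometric since conjugation by a fibrewise unitary preserves operator norms at each point, and the norm on both sides is the supremum of the pointwise operator norms. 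Hence the map is a $C^*$-isomorphism, as required.

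The main obstacle is the topological step: establishing that $V_\rho$ is genuinely trivial over all of the compact manifold-with-boundary $\overline{R}$ and extracting a continuous frame, rather than merely asserting a homotopy-theoretic triviality over the spine. One must be careful to produce the trivialization over $\overline{R}$ itself, and to do so within the unitary (rather than merely the general linear) structure group, so that the resulting algebra isomorphism is a genuine $*$-isomorphism and not just a Banach-algebra isomorphism. By contrast, once a unitary frame $\Phi$ is in hand, the multiplicativity, $*$-preservation, and norm identities are all routine.
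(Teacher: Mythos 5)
Your proposal is correct and follows essentially the same route as the paper: both rest on the topological fact that unitary bundles over $\overline{R}$ are trivial (since $\overline{R}$ deformation retracts onto a finite wedge of circles and $U_n(\mathbb{C})$ is connected), and then use the resulting trivialization to untwist the matrix bundle into $C(\overline{R}) \otimes M_n(\mathbb{C})$. The only cosmetic difference is that the paper phrases the trivialization as a continuous section $\sigma$ of the associated principal bundle and defines the $*$-isomorphism on the explicit basis of sections $\sigma \cdot v_i \otimes v_j^* \cdot \sigma^*$, whereas you conjugate the endomorphism bundle $\mathrm{End}(V_\rho)$ by the unitary frame $\Phi$ — these are the same mechanism, since a section of the principal $U_n$-bundle is precisely such a frame.
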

    \begin{proof} As we remarked previously, we may think of elements in $\mathcal{B}_1$ as sections of a flat matrix $PU_n(\mathbb{C})$-bundle, with bundle space $\widetilde{D}\times_{Ad(\rho)}M_n(\mathbb{C})$. The bundle space of the underlying principal bundle is $\widetilde{D}\times_{Ad(\rho)}PU_n(\mathbb{C})$, and we identify $\widetilde{D}\times_{Ad(\rho)}PU_n(\mathbb{C}) \simeq (\widetilde{D}\times_\rho (U_n(\mathbb{C}))^*) \otimes (\widetilde{D}\times_{\rho^*} (U_n(\mathbb{C})))$. Since $\widetilde{D}\times_\rho (U_n(\mathbb{C}))^*$ is continuously $PU_n(\mathbb{C})$-trivial, there exists a continuous section $\sigma : \overline{R} \to \widetilde{D}\times_\rho (U_n(\mathbb{C}))^*$. Also, $\sigma^*: \overline{R} \to \widetilde{D}\times_{\rho^*} (U_n(\mathbb{C}))$ given by $\sigma^*(z)=\sigma(z)^*$ will be a continuous section of $\widetilde{D}\times_{\rho^*} (U_n(\mathbb{C}))$. Pick an orthonormal basis $v_1,v_2,\ldots,v_n$ of $\mathbb{C}^n$ (thought of as column vectors). Then $\sigma \cdot v_i$ is a continuous section of the associated vector bundle $\widetilde{D} \times_{\rho^*}\mathbb{C}^n$, and $v_i^*\cdot \sigma^*$ is a continuous section of the associated vector bundle $\widetilde{D} \times_\rho (\mathbb{C}^n)^*$, for all $i=1,\ldots, n$. Thus $ \sigma \cdot v_i \otimes v_j^* \cdot \sigma^*$ is a continuous section of the matrix bundle $\widetilde{D}\times_{Ad(\rho)}M_n(\mathbb{C})$ for each $i,j \in \{ 1,\ldots,n \}$. Also, the collection of sections $\{ \sigma \cdot v_i \otimes v_j^* \cdot \sigma^* \mid \; 1 \leq i,j \leq n\}$ are linearly independent over $\mathcal{A}_1$, and so form a basis for the algebra and $\mathcal{B}$-module $\mathcal{B}_1$.
    
    Let $E_{ij}$ denote the matrix unit with a 1 in the $i,j$ entry. We claim that $f: \mathcal{B}_1 \to \mathcal{A}_1 \otimes M_n(\mathbb{C})$, defined on the basis by
    \[ f(\sigma \cdot v_i \otimes v_j^* \cdot \sigma^*) = 1 \otimes E_{ij} \]
    is a $*$-isomorphism. It is well-defined,  $\mathcal{B}_1$-linear, and a $*$-linear map on the basis. To check that it is an algebra isomorphism, we observe that 
    \[ \left( \sigma \cdot v_i \otimes v_j^* \cdot \sigma^* \right) \left( \sigma \cdot v_k \otimes v_l^* \cdot \sigma^*\right) = 
    \begin{cases} \sigma \cdot v_i \otimes v_l^* \cdot \sigma^*& \text{ if } j=k \\
               0  & \text{ else }
        \end{cases}
        \] and 
    \[ (1 \otimes E_{ij})(1 \otimes E_{kl})= 
    \begin{cases} 1 \otimes E_{il} \text{ if } j=k \\
               0  & \text{ else }
        \end{cases}.
        \]
  Thus $f$ is a $*$-isomorphism of $C^*$-algebras.  
        \end{proof}

The $*$-isomorphism $f$ provides a way to construct a strong Morita context containing $\mathcal{A}_1$ and $\mathcal{B}_1$, which we will see in the next proposition, Lemma \ref{lem:strongMoritaContext}.

 However, the $C^*$-isomorphism $f$ does not preserve the holomorphic subalgebras in general, because $\sigma^*$ will not be a holomorphic section in general. So, the situation is more complicated for the algebras $\mathcal{A}$ and $\mathcal{B}$. 
	
\begin{lemma} Let $\rho : \pi_1(R) \to U_n(\mathbb{C})$ be any representation. Then $(\mathcal{A}_1,\mathcal{B}_1, X_1,Y_1, (\cdot,\cdot)_1, [\cdot,\cdot]_1)$ is a strong Morita context. \label{lem:strongMoritaContext}
	\end{lemma}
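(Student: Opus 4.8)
The plan is to build the equivalence bimodule structure explicitly and then verify the strong Morita context axioms of \cite[Def.~3.1]{Blecher2000} by producing finite-norm lifts of the two identities. Since $\mathcal{A}_1 = C(\overline{R})$ and $\mathcal{B}_1$ are unital $C^*$-algebras and, via the adjoint, $Y_1$ is completely isometric to $X_1^*$ with the pairings $(x,x^*)_1 = x x^* \geq 0$ and $[x^*,x]_1 = x^* x \geq 0$ positive, the data $X_1, Y_1$ together with $(\cdot,\cdot)_1, [\cdot,\cdot]_1$ form a pair of mutually adjoint positive (Hilbert $C^*$-module) inner products. As in Example \ref{ex:C*-inclusionexample}, it therefore suffices to check the two algebraic compatibility identities together with fullness (the complete-quotient conditions) to realize a Rieffel imprimitivity bimodule, which is exactly a strong Morita context in the sense of \cite{Blecher2000}.

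First I would record the compatibility conditions $(x_1,y)_1 x_2 = x_1 [y,x_2]_1$ and $[y_1,x]_1 y_2 = y_1 (x,y_2)_1$: with every pairing given by pointwise matrix multiplication of sections, these are nothing but the associativity of the triple products (row)(column)(row) and (column)(row)(column), while the equivariance computations already recorded before the lemma show each product lands in the correct equivariant space, so that $X_1$ is an $\mathcal{A}_1$-$\mathcal{B}_1$ bimodule and $Y_1$ a $\mathcal{B}_1$-$\mathcal{A}_1$ bimodule. Complete boundedness of the pairings and of the module maps is immediate from the supremum-norm operator space structure placed on these spaces.

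The substantive step is fullness, i.e.~producing lifts of $1_{\mathcal{A}_1}$ and $1_{\mathcal{B}_1}$, and here I would reuse the global trivialization furnished in the proof of the preceding lemma. Because every continuous unitary bundle over $\overline{R}$ is trivial, the section $\sigma$ used there is equivalent to a continuous unitary-valued $U : \widetilde{D} \to U_n(\mathbb{C})$ that is $\rho$-equivariant, $U(\mathfrak{z}\cdot g) = \rho(g)^{-1} U(\mathfrak{z})$. Writing $e_1,\ldots,e_n$ for the standard basis and setting $y^i := U e_i$ and $x^i := e_i^t U^*$, one checks from $U^*(\mathfrak{z}\cdot g) = U^*(\mathfrak{z})\rho(g)$ and unitarity that $y^i \in Y_1$ and $x^i \in X_1$. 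Then $\sum_{i=1}^n [y^i,x^i]_1 = U\bigl(\sum_i e_i e_i^t\bigr) U^* = U U^* = 1_{\mathcal{B}_1}$, while a single pair already gives $(x^1,y^1)_1 = e_1^t U^* U e_1 = 1_{\mathcal{A}_1}$. Since $U$ is unitary-valued its entries are uniformly bounded, so the operator-space norms $\|(y^1,\ldots,y^n)\|_{cb}$ and $\|(x^1,\ldots,x^n)^t\|_{cb}$ are finite, yielding finite-norm $cb$ liftings of both identities.

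Finally, by \cite[Lem.~2.8, Rmk.~2.10]{Blecher2000} the existence of these $cb$ liftings upgrades the two induced pairings to surjective complete quotient maps with $cb$ inverse onto $\mathcal{B}_1$ and $\mathcal{A}_1$; combined with the compatibility identities and the positivity noted above, this exhibits $(\mathcal{A}_1,\mathcal{B}_1,X_1,Y_1,(\cdot,\cdot)_1,[\cdot,\cdot]_1)$ as a strong Morita context. The only real obstacle is the existence of the global equivariant unitary $U$, i.e.~the triviality of the underlying principal $U_n(\mathbb{C})$-bundle over $\overline{R}$, but this is precisely what the preceding lemma supplies, leaving only routine bookkeeping of equivariance conventions and norms. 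As an alternative route, one could instead transport the standard strong Morita equivalence $\mathcal{A}_1 \sim M_n(\mathcal{A}_1)$ of \cite[3.4]{Blecher2000} through the $*$-isomorphism $f$ of the preceding lemma, using $U$ to identify $X_1$ and $Y_1$ with $R_n(\mathcal{A}_1)$ and $C_n(\mathcal{A}_1)$ compatibly with the pairings.
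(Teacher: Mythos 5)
Your proposal is essentially the paper's own proof: the paper likewise takes the trivializing continuous section $\sigma$ of the principal unitary bundle from the preceding lemma, sets $y'_i := \sigma\cdot v_i$ and $x'_i := v_i^*\cdot\sigma^*$ (your $U e_i$ and $e_i^t U^*$ in unitary-function language), computes $(x'_i,y'_i)=1_{\mathcal{A}_1}$ and $\sum_i [y'_i,x'_i]=1_{\mathcal{B}_1}$, and invokes \cite[Lem.~2.8]{Blecher2000} to get complete quotient maps.

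One point needs tightening: you claim only that the cb norms of $(y^1,\ldots,y^n)$ and $(x^1,\ldots,x^n)^t$ are \emph{finite}, and then assert that \cite[Lem.~2.8, Rmk.~2.10]{Blecher2000} yields complete quotient maps. A finite-norm lifting only gives a $cb$ Morita context; a \emph{strong} context requires liftings of norm $1$ (norm $<1+\epsilon$ for every $\epsilon$), and the strong/cb distinction is exactly the issue this paper is about (Theorem \ref{NotACIContext} shows the holomorphic subcontext fails to be strong). The fix is immediate and is what the paper records: the assembled row $(y^1,\ldots,y^n)$ is the matrix $U$ itself and the column $(x^1,\ldots,x^n)^t$ is $U^*$, so both have sup norm exactly $1$ by unitarity, giving the required norm-$1$ liftings. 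With that one sentence added, your argument matches the paper's.
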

	\begin{proof} It's already clear that $\mathcal{A}_1$ is strongly Morita equivalent to $\mathcal{B}_1$, since $\mathcal{B}_1$ is $*$-isomorphic to a $C^*$-algebra that is strongly Morita equivalent to $\mathcal{A}_1=C(\overline{R})$. What we need to show is that $X$ and $Y$ implement this Morita context, which is also rather easy to do once we understand the $*$-isomorphism.
	
	The cross-sections of the bundle $\widetilde{D} \times_{\rho^*}\mathbb{C}^n$ can be identified with elements of $Y_1$, and the sections of $\widetilde{D} \times_\rho (\mathbb{C}^n)^*$ can be identified with elements of $X_1$. $X_1$ is a $\mathcal{B}_1$-$\mathcal{A}_1$ bimodule, and $Y_1$ is an $\mathcal{A}_1$-$\mathcal{B}_1$ bimodule. Multiplication of appropriately-sized matrices in $M_{n,m}(\mathbb{C})$ is completely contractive, and so matrix multiplication as a map from $(M_{m,n}(\mathbb{C})\otimes C(\widetilde{D})) \odot (M_{n,m}(\mathbb{C})\otimes C(\widetilde{D})$) to $M_{m,m}(\mathbb{C})\odot C(\widetilde{D})$ is also completely contractive. 
	
	Let $x \in X_1$, $y \in Y_1$, and let $\odot$ denote the algebraic tensor product. The matrix multiplication map, $m : X_1 \odot Y_1 \to \mathcal{A}_1$ (resp., $m: Y_1 \odot X_1 \to \mathcal{B}_1$), is an $\mathcal{A}_1$-$\mathcal{A}_1$- (resp.~$\mathcal{B}_1$-$\mathcal{B}_1$)-bimodule map. Matrix multiplication is also associative, i.e.~for $x,x' \in X_1$, $y,y' \in Y_1$, $x'm(y,x)=m(x',y)x$ and $y'm(x,y)=m(y',x)y$.
	
	Let $x'_i:=v_i^*\cdot \sigma^* \in X_1$ and $y'_i:=\sigma \cdot v_i \in Y_1$. Then we have:
	\begin{equation*}
(x'_i(\mathfrak{z}), y'_i(\mathfrak{z})) = 1_\mathcal{A} \text{, for all }i
	\end{equation*}
	and
\begin{equation*}
\sum_{n=1}^n [ y'_i(\mathfrak{z}), x'_i(\mathfrak{z})] =  1_\mathcal{B}.\end{equation*}

Thus $1_\mathcal{A}$ is the in range of $(\cdot,\cdot)_\mathcal{A}$ and $1_\mathcal{B}$ is in the range of $[\cdot,\cdot]_\mathcal{B}$, and the pairings are onto.

By construction, $\|[x_1,x_2,\ldots,x_n]\|, \|[y_1,y_2,\ldots,y_n]^t\|=1$, which means that by \cite[Lem.~2.8]{Blecher2000}, the associated quotient maps to $(\cdot ,\cdot )_1$ and $[ \cdot,\cdot ]_1$ are complete quotient maps.
		\end{proof}

Thus we have that $\mathcal{A}_1$ is strongly Morita equivalent to $\mathcal{B}_1$.



\begin{lemma} \label{PairingsOnto} The pairings $(\cdot, \cdot)_\mathcal{A}$ and $[\cdot, \cdot]_\mathcal{B}$ are surjective onto the algebras $\mathcal{A}$ and $\mathcal{B}$, respectively.
	\end{lemma}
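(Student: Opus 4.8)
The plan is to produce one global holomorphic frame for the holomorphic vector bundle underlying $Y$ and to read both surjectivity statements off of the resulting dual-frame identities. Recall from the proof of Lemma~\ref{lem:strongMoritaContext} that, in the bundle picture, $Y$ is identified with the holomorphic cross-sections of $V_Y := \widetilde{D}\times_{\rho^*}\mathbb{C}^n$ and $X$ with the holomorphic cross-sections of the dual bundle $V_X := \widetilde{D}\times_{\rho}(\mathbb{C}^n)^*$; the pairing $(\cdot,\cdot)_{\mathcal{A}}$ is the fibrewise contraction $V_X\times V_Y\to\overline{R}\times\mathbb{C}$ (row times column), and $[\cdot,\cdot]_{\mathcal{B}}$ is the fibrewise map $V_Y\times V_X\to \mathrm{End}(V_Y)=E$ (column times row). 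Because $X$ is a left $\mathcal{A}$-module with $(a\cdot x,y)=a\,(x,y)$ and $Y$ is a left $\mathcal{B}$-module with $[b\cdot y,x]=b\,[y,x]$, the ranges of the two pairings are ideals of $\mathcal{A}$ and $\mathcal{B}$ respectively; hence it suffices to place $1_{\mathcal{A}}$ in the range of $(\cdot,\cdot)_{\mathcal{A}}$ and $1_{\mathcal{B}}$ in the range of $[\cdot,\cdot]_{\mathcal{B}}$.

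The crux is the claim that $V_Y$ is holomorphically trivial over $\overline{R}$. I would prove this by extending across the boundary: using a collar of $\partial R$, enlarge $\overline{R}$ to an open Riemann surface $\widehat{R}\supset\overline{R}$ onto which $\overline{R}$ deformation retracts, so that $\pi_1(\widehat{R})\cong\pi_1(R)$ and the flat structure $\rho$ extends, giving a holomorphic vector bundle $\widehat{V}$ on $\widehat{R}$ restricting to $V_Y$. Since $\widehat{R}$ is a non-compact Riemann surface it is Stein, and since it is homotopy equivalent to a wedge of circles $\widehat{V}$ is topologically trivial; by the Oka--Grauert principle $\widehat{V}$ is then holomorphically trivial. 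This is the main obstacle, and it is exactly the point at which the holomorphic case could have failed: I only need triviality of the vector bundle $V_Y$ (an unconstrained holomorphic frame), not a trivialization by a \emph{unitary} holomorphic frame, which is the genuinely obstructed condition separating the $cb$ context from a strong Morita equivalence (compare the earlier observation that $\sigma^*$ need not be holomorphic).

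Restricting a holomorphic frame of $\widehat{V}$ to $\overline{R}$ yields holomorphic cross-sections $y^1,\dots,y^n\in Y$ of $V_Y$ that form a basis in every fibre and that are continuous (indeed real-analytic) up to $\partial R$, i.e.\ bounded equivariant holomorphic maps $\widetilde{D}\to\mathbb{C}^n$. Let $x^1,\dots,x^n\in X$ be the dual holomorphic frame of $V_X$, characterized by the fibrewise contractions $\langle x^i,y^j\rangle=\delta_{ij}$. Then $(x^i,y^j)=\delta_{ij}\,1_{\mathcal{A}}$ (constant functions) and $\sum_{i=1}^n[y^i,x^i]=\sum_{i=1}^n y^i\otimes x^i=\mathrm{id}_{V_Y}=1_{\mathcal{B}}$. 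In particular $(x^1,y^1)=1_{\mathcal{A}}$ lies in the range of $(\cdot,\cdot)_{\mathcal{A}}$ and $1_{\mathcal{B}}=\sum_i[y^i,x^i]$ lies in the range of $[\cdot,\cdot]_{\mathcal{B}}$. Finally, for arbitrary $a\in\mathcal{A}$ we have $a=(a\,x^1,y^1)$ with $a\,x^1\in X$, and for arbitrary $b\in\mathcal{B}$ we have $b=\sum_{i=1}^n[b\,y^i,x^i]$ with $b\,y^i\in Y$, so both pairings are surjective.
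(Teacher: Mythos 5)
Your proposal is correct and follows essentially the same route as the paper: extend the flat holomorphic bundle past $\partial R$ to an open Riemann surface, invoke triviality of holomorphic vector bundles there (the paper cites Widom for the extension and Forster for the frame, where you argue via Stein theory and Oka--Grauert, which is the proof of that same standard fact), restrict a holomorphic frame to $\overline{R}$, and pair it with the dual (inverse-matrix) frame to exhibit $1_{\mathcal{A}}$ and $1_{\mathcal{B}}$ in the ranges. Your explicit final step $a=(a\,x^1,y^1)$, $b=\sum_i[b\,y^i,x^i]$ just makes precise the module-structure argument the paper leaves implicit.
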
 
\begin{proof}
In \cite{McCormick2019}, it is shown that $\mathcal{A}$ and $\mathcal{B}$ are nonempty, and furthermore: for any $\mathfrak{z}_0 \in \widetilde{D}$ and $(a_{ij})\in M_n(\mathbb{C})$, there is an $F \in \mathcal{B}$ with $F(\mathfrak{z}_0)=(a_{ij})$. 

We now construct elements $y_i$ in $Y$ so that the $n \times n$ matrix $F$ formed by the columns $[y_1, y_2, \ldots, y_n]$ has the property that (i) $F(\mathfrak{z})$ is invertible for any $\mathfrak{z} \in \tilde{D}$, and (ii) $F^{-1}$ is a matrix whose entries are continuous holomorphic functions. The representation $\rho : \pi_1(R) \to U_n(\mathbb{C})$ defines a flat holomorphic vector bundle over $R$.  We may think of $\overline{R}$ as embedded in the open Riemann surface $\breve{R}$ and extend our vector bundle over $\overline{R}$ to a vector bundle over $\breve{R}$ as in \cite[pg.~306]{Widom1971}. By standard results \cite[30.1]{Forster2012}, there are $n$ linearly independent holomorphic sections, $\sigma_1, \ldots, \sigma_n$, of the extended vector bundle over the open Riemann surface $\breve{R}$. Restrict each section $\sigma$ to $\overline{R}$; these will be sections of the original bundle over $\overline{R}$. Each section $\sigma_i$ of a flat holomorphic vector bundle over $\overline{R}$ can be identified with a $\pi_1(R)$-equivariant function $y_i \in Y$. Define $F$ to be the matrix $[y_1, y_2, \ldots y_n]$, and let $x_i$ be the $i$th row of $F^{-1}$. The entries of $F^{-1}$ are certainly continuous holomorphic functions on $\tilde{D}$ since by Cramer's rule we just need to check that $det(F(\mathfrak{z}))$ is never zero. Then $x_i$ is a vector of continuous holomorphic functions. Each $x_i$ and $y_i$ also satisfies the appropriate equivariance relation.
That is, we have:
\begin{equation*} \begin{split}
y_i(\mathfrak{z} \cdot g)& = \rho(g \inv) y_i(\mathfrak{z})\\
x_i(\mathfrak{z} \cdot g)& =  x_i(\mathfrak{z}) \rho(g).
	\end{split} \end{equation*}
By definition, $[y_1, y_2, \ldots, y_n] [x_1, x_2, \ldots x_n]^t=I_n = [x_1, x_2, \ldots x_n]^t [y_1 y_2 \ldots y_n]$. This means that
\begin{equation*}
(x_i(\mathfrak{z}), y_i(\mathfrak{z})) = 1_\mathcal{A} \text{, for all }i
	\end{equation*}
	and
\begin{equation*}
\sum_{n=1}^n [ y_i(\mathfrak{z}), x_i(\mathfrak{z})] =  1_\mathcal{B}.
	\end{equation*}   
Thus $1_\mathcal{A}$ is the in range of $(\cdot,\cdot)_\mathcal{A}$ and $1_\mathcal{B}$ is in the range of $[\cdot,\cdot]_\mathcal{B}$, and the pairings are onto.
\end{proof}

\begin{thm} $(\mathcal{A},\mathcal{B}, X,Y, (\cdot,\cdot)_{\mathcal{A}}, [\cdot,\cdot]_{\mathcal{B}})$ is a $cb$ Morita context. \label{IsACBContext}
	\end{thm}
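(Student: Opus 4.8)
The plan is to verify directly the four defining conditions of a $cb$ Morita context (Definition 3.1), relying almost entirely on the explicit elements already produced in Lemma \ref{PairingsOnto} together with the lifting criterion \cite[Lem.~2.8, Rmk.~2.10]{Blecher2000}. First I would dispatch the two module-associativity identities $(x_1,y)x_2 = x_1[y,x_2]$ and $[y_1,x]y_2 = y_1(x,y_2)$. Since both pairings are pointwise matrix multiplication of equivariant functions, these are immediate from associativity of matrix multiplication, exactly as recorded in the proof of Lemma \ref{lem:strongMoritaContext}; one also checks, as in the setup, that the equivariance factors $\rho(g)\rho(g)\inv$ cancel so that all products land in $\mathcal{A}$ and $\mathcal{B}$ as required, and that the module actions are completely contractive matrix multiplications.

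The substance is the remaining two conditions, that $(\cdot,\cdot)_\mathcal{A}$ and $[\cdot,\cdot]_\mathcal{B}$ induce complete quotient maps with $cb$ inverse on the respective Haagerup-tensor quotients. Here I would invoke the criterion recalled after Definition 3.1: for each pairing it suffices to exhibit a $cb$ lifting of the corresponding identity with finite norm constant. The elements of Lemma \ref{PairingsOnto} furnish exactly these. With $F=[y_1,\ldots,y_n]$ and $x_i$ the rows of $F\inv$, that lemma gives
\[ 1_\mathcal{B} = \sum_{i=1}^n [y_i,x_i], \qquad (x_i,y_i)=1_\mathcal{A} \quad (1\le i\le n). \]
The second relation yields a single-term algebraic lift $1_\mathcal{A}=(x_1,y_1)$, while the first is an $n$-term algebraic lift of $1_\mathcal{B}$. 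By the discussion of algebraic lifts following the lifting criterion, each is automatically a $cb$ lifting whose norm is $\max\{\|(y_1,\ldots,y_n)\|_{cb},\|(x_1,\ldots,x_n)^t\|_{cb}\}$ for $1_\mathcal{B}$, and $\max\{\|x_1\|,\|y_1\|\}$ for $1_\mathcal{A}$. Since $(y_1,\ldots,y_n)$ assembles into $F$ and $(x_1,\ldots,x_n)^t$ into $F\inv$, these reduce to $\sup_{\overline{R}}\|F\|$ and $\sup_{\overline{R}}\|F\inv\|$.

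The crux—and the point that separates the $cb$ conclusion here from the \emph{strong} conclusion of Lemma \ref{lem:strongMoritaContext}—is the finiteness of these norms. In Lemma \ref{lem:strongMoritaContext} the generating sections were built from a unitary-valued section, so the assembled matrix was a pointwise isometry and the lift had norm exactly $1$, producing a strong context. In the holomorphic setting $F$ is merely invertible, not unitary, so the norms exceed $1$ in general. However, the entries of $F$ are continuous and, by the nonvanishing of $\det F$ on $\overline{R}$ established in Lemma \ref{PairingsOnto}, the entries of $F\inv$ are continuous as well; as both matrix norms descend to subharmonic functions on the compact space $\overline{R}$, the suprema $\sup_{\overline{R}}\|F\|$ and $\sup_{\overline{R}}\|F\inv\|$ are finite. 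Thus each identity admits a finite-norm $cb$ lifting, and \cite[Lem.~2.8]{Blecher2000} applies to both $(\cdot,\cdot)_\mathcal{A}$ and $[\cdot,\cdot]_\mathcal{B}$, completing the verification. I expect no genuine obstacle beyond this finiteness observation, which is precisely what makes the context $cb$ rather than strong.
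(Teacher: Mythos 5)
Your proof is correct and takes essentially the same route as the paper's: both verify the definition directly, using complete contractivity of pointwise matrix multiplication for the pairing/module conditions, and then the lifts $\sum_i [y_i,x_i]=1_{\mathcal{B}}$ and $(x_i,y_i)=1_{\mathcal{A}}$ from Lemma \ref{PairingsOnto} together with \cite[Lem.~2.8]{Blecher2000} to get the complete-quotient-with-$cb$-inverse conditions, with finiteness of the lift norms coming from the equivariance/compactness of $\overline{R}$. The only cosmetic difference is that the paper opens by citing the $cb$ isomorphism $\mathcal{B}\simeq \mathcal{A}\otimes M_n(\mathbb{C})$ of \cite{McCormick2019} as framing before doing the same direct verification, whereas your argument is self-contained.
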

	\begin{proof} Note that in \cite{McCormick2019} it is shown that $\mathcal{B}$ is $cb$ isomorphic to $\mathcal{A} \otimes M_n(\mathbb{C})$. Since any two algebras that are $cb$ isomorphic are in particular $cb$ Morita equivalent, what we really need to demonstrate is that $X$ and $Y$ implement the aforementioned Morita equivalence.

	The algebras $\mathcal{A}$ and $\mathcal{B}$ are unital and $1_\mathcal{A}$ and $1_\mathcal{B}$ act as identity on $X$ and $Y$. Therefore, $X$ is an essential $\mathcal{A}$-$\mathcal{B}$-bimodule, and $Y$ is an essential $\mathcal{B}$-$\mathcal{A}$-bimodule. We have shown all requirements of a $cb$ Morita context except the last two, namely that the induced linear maps $(\cdot)_\mathcal{A}: X \otimes_{h\mathcal{A}} Y/ker(\cdot)_\mathcal{A} \to \mathcal{A}$ and $[\cdot]_\mathcal{B}:Y \otimes_{h\mathcal{A}} X/ ker[\cdot]_\mathcal{B} \to \mathcal{B}$ are completely bounded with completely bounded inverses. 
	
	It is straightforward to see that the pairings are completely bounded. For, multiplication of appropriately-sized matrices in $M_{n,m}(\mathbb{C})$ is completely contractive \cite[page]{Blecher2004d}, and so matrix multiplication as a map from $(M_{m,n}(\mathbb{C})\otimes C(\widetilde{D})) \odot (M_{n,m}(\mathbb{C})\otimes C(\widetilde{D})$) to $M_{m,m}(\mathbb{C})\odot C(\widetilde{D})$ is also completely contractive.
	
	Above in Lemma \ref{PairingsOnto} we gave a $cb$ lifting $\sum y_i \otimes x_i$ of $1_\mathcal{B}$. That is, $(y_i\otimes x_i)_\mathcal{B}=1_\mathcal{B}$ and the $cb$-norm of $[\cdot,\cdot]_\mathcal{B}^{-1}: \mathcal{B} \to Y \odot X/ker[\cdot,\cdot]_\mathcal{B}$ is bounded above by $\|[y_i] \| \|[x_i]^t \|_{cb}$. Similarly, we have a $cb$ lifting of $1_\mathcal{A}$. 
    \end{proof}

\begin{prop} The algebra $\mathcal{B}$ is similar to an operator algebra $\mathcal{B}'$ which is strongly Morita equivalent to $\mathcal{A}$. \label{prop:ApplicationOfMainThm}
    \end{prop}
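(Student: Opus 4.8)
The plan is simply to verify that the subcontext $(\mathcal{A},\mathcal{B},X,Y,(\cdot,\cdot)_{\mathcal{A}},[\cdot,\cdot]_{\mathcal{B}})$ meets every hypothesis of Theorem~\ref{mainprop}, whence the conclusion follows at once with $\mathcal{B}' := S^{-1}\rho(\mathcal{B})S$. There are two bundles of hypotheses to check: that we are looking at a unital $cb$ subcontext of a $cb$ Morita context of $C^*$-algebras, and that a pair of compatible symmetric algebraic lifts of $1_{\mathcal{A}}$ and $1_{\mathcal{B}}$ exists.

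The first is almost immediate from work already done. By Theorem~\ref{IsACBContext}, $(\mathcal{A},\mathcal{B},X,Y,(\cdot,\cdot)_{\mathcal{A}},[\cdot,\cdot]_{\mathcal{B}})$ is a $cb$ Morita context, and by Lemma~\ref{lem:strongMoritaContext} the ambient data $(\mathcal{A}_1,\mathcal{B}_1,X_1,Y_1,(\cdot,\cdot)_1,[\cdot,\cdot]_1)$ is a strong, hence $cb$, Morita context of the $C^*$-algebras $\mathcal{A}_1 = C(\overline{R})$ and $\mathcal{B}_1 \cong C(\overline{R})\otimes M_n(\mathbb{C})$. All of $\mathcal{A},\mathcal{B},X,Y$ consist of holomorphic restrictions of elements of their ambient counterparts, so $\mathcal{A}\subset\mathcal{A}_1$ and $\mathcal{B}\subset\mathcal{B}_1$ contain the (constant) units, $X\subset X_1$ and $Y\subset Y_1$ are closed submodules, and the restricted pairings are onto $\mathcal{A}$ and $\mathcal{B}$ by Lemma~\ref{PairingsOnto}; this is exactly what it means to be a unital $cb$ subcontext.

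The heart of the matter is the second bundle, and here the construction of Lemma~\ref{PairingsOnto} does all the work. There we produced an invertible holomorphic matrix $F=[y_1,\dots,y_n]$ whose inverse $F^{-1}$ is again holomorphic and has the $x_i$ as its rows. The matrix identity $F^{-1}F = I_n$ reads entrywise as $(x_i,y_j)=\delta_{ij}\,1_{\mathcal{A}}$, while $FF^{-1}=I_n$ reads $\sum_{i}[y_i,x_i]=1_{\mathcal{B}}$. Thus every pairing $(x_i,y_j)$ is a \emph{constant} scalar function, so its adjoint in $\mathcal{A}_1=C(\overline{R})$ is again the constant $\delta_{ij}\in\mathcal{A}$; in particular $1_{\mathcal{B}}=\sum_i[y_i,x_i]$ is a symmetric lift. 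Taking the single-term lift $1_{\mathcal{A}}=(x_1,y_1)$ (legitimate since $(x_1,y_1)=1_{\mathcal{A}}$), the remaining compatibility pairings $(x_1,y_j)^*$ and $(x_i,y_1)^*$ are once more constants lying in $\mathcal{A}$, so the pair is compatible symmetric in the sense of Definition~\ref{def:CompatibleSymmetric}.

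With both hypotheses confirmed, Theorem~\ref{mainprop} yields a $*$-representation $\rho:\mathcal{B}_1\to B(H)$ and an invertible $S\in B(H)$ for which $S^{-1}\rho(\mathcal{B})S$ is strongly Morita equivalent to $\mathcal{A}$. Since $\rho$ may be taken faithful (as in the proof of Theorem~\ref{mainprop}), it is completely isometric on $\mathcal{B}_1$, and conjugation by $S$ is a similarity; hence $\mathcal{B}$ is similar to $\mathcal{B}':=S^{-1}\rho(\mathcal{B})S$, completing the proof. The only genuinely delicate point is the symmetry condition: for a generic holomorphic lift the conjugate of a pairing $(x_i,y_j)$ need not be holomorphic, so symmetry would fail --- it is precisely the fact that the holomorphic frame $F$ can be chosen with holomorphic inverse, forcing the $(x_i,y_j)$ to be \emph{scalar constants} $\delta_{ij}$, that rescues the argument.
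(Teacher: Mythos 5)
Your proposal is correct and follows essentially the same route as the paper: both verify the hypotheses of Theorem~\ref{mainprop} by citing Theorem~\ref{IsACBContext} (together with Lemma~\ref{lem:strongMoritaContext}) for the unital $cb$ subcontext condition, and by observing that the lifts constructed in Lemma~\ref{PairingsOnto} are compatible symmetric because the identities $F^{-1}F = I_n = FF^{-1}$ force the pairings $(x_i,y_j)$ to be the constants $\delta_{ij}1_{\mathcal{A}}$. Your write-up is in fact more explicit than the paper's (which only says the lifts are compatible symmetric ``by construction''), and your closing remark about why holomorphy would otherwise obstruct symmetry is exactly the right point.
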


\begin{proof} The algebraic lifts of $1_\mathcal{A}$ and $1_\mathcal{B}$ mentioned in Lemma \ref{PairingsOnto} are compatible symmetric by construction, because of the equation 
\[ [y_1, y_2, \ldots, y_n] [x_1, x_2, \ldots x_n]^t=I_n = [x_1, x_2, \ldots x_n]^t [y_1 y_2 \ldots y_n] \] We saw in Theorem \ref{IsACBContext} that the context $(\mathcal{A},\mathcal{B}, X,Y, (\cdot,\cdot)_{\mathcal{A}}, [\cdot,\cdot]_{\mathcal{B}})$ is a unital $cb$ subcontext of $(\mathcal{A}_1,\mathcal{B}_1, X_1,Y_1, (\cdot,\cdot)_1, [\cdot,\cdot]_1)$. Therefore, the conclusion follows from Theorem \ref{mainprop}.
    \end{proof}

We will next demonstrate that in Theorem \ref{IsACBContext}, we cannot replace ``completely bounded'' with ``completely isometric'', i.e.~we cannot expect a Theorem \ref{lem:strongMoritaContext} for any subcontext, independent of $\rho$. However, it is first necessary to make a few remarks. 

The algebras $\mathcal{A}$ and $\mathcal{B}$ are defined with respect to a representation $\rho : \pi_1(R) \to U_n(\mathbb{C})$ (or more accurately, by the associated representation $Ad(\rho)$). Suppose we realize concretely $\pi_1(R)=\pi_1(R,s)$ based at a point $s \in R$. Then we may think of $\rho$ as a map taking equivalence classes $[a]$ of loops $a$ based at $s$ to $U_n(\mathbb{C})$. Representing $\rho$ in such a way is well-defined on equivalence classes, for any two loops with $[a]=[b]$ induce the same transformation of the covering space $\widetilde{D}$ and thus produce the same underlying principle bundle. Thus once fixing $s$ we may without ambiguity write $\rho([a])=\rho(a)$ for a loop $a$ based at $s$. 

If $\rho : \pi_1(R) \to U_n(\mathbb{C})$, and $U \in U_n(\mathbb{C})$, then $\rho U : \pi_1(R) \to U_n(\mathbb{C})$ given by $\rho U(a) := \rho(a) U $ determines the same algebras $\mathcal{B}$ and $\mathcal{B}_1$, but different modules $X,X_1, Y, Y_1$. So, suppose $\mathcal{B} = C(\overline{R}) \otimes M_n(\mathcal{C})$ and $\mathcal{B}_1 = A(\overline{R}) \otimes M_n(\mathcal{C})$. In this case, we can think of $\mathcal{B}$ or $\mathcal{B}_1$ as being determined by the trivial representation $\rho: \pi_1(R) \to U_n(\mathbb{C}) $, $\rho(\pi_1(R))=I_n$ or by a representation $\rho'=\rho U$ for some $U \in U_n(\mathbb{C})$ (since the algebras $\mathcal{B}$ and $\mathcal{B}_1$ are really determined by the representation $Ad(\rho)$). However, $\rho$ and $\rho'$ will give rise to different modules $X,X_1,Y,Y_1$. The theorem below will have the consequence that even though $\mathcal{A}$ is strongly Morita equivalent to $\mathcal{A}\otimes M_n(\mathbb{C})$, $\mathcal{A}$ will not be strongly Morita equivalent to $\mathcal{A} \otimes M_n(\mathbb{C})$ via the ($cb$) Morita context  $(\mathcal{A},\mathcal{B},X({\rho '}),Y({\rho '}),(\cdot,\cdot),[\cdot,\cdot])$ when $U$ does not have an eigenvalue of 1. In particular, Lemma \ref{lem:strongMoritaContext} does not hold for the subcontext $(\mathcal{A},\mathcal{B},X({\rho '}),Y({\rho '}),(\cdot,\cdot),[\cdot,\cdot])$ of $(\mathcal{A}_1,\mathcal{B}_1,X_1 ({\rho '}),Y_1 ({\rho '}),(\cdot,\cdot)_1,[\cdot,\cdot])_1$.

The proof of Thm.~\ref{NotACIContext} below is inspired by \cite[Ex.~8.3]{Blecher2000}, which is a special case of the theorem when $\overline{R}$ is an annulus.

\begin{thm} Let $X$, $Y$, $(\cdot,\cdot)$, and $[\cdot,\cdot]$ be chosen as above, and make the assumption that given $s \in R$, there is an element $a \in \pi_1(R,s)$ so that the operator $\rho(a)$ does not have an eigenvalue of 1. Then the context $(\mathcal{A},\mathcal{B},X,Y,(\cdot,\cdot),[\cdot,\cdot])$ is not a strong Morita context. \label{NotACIContext}
	\end{thm}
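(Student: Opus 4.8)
The plan is to argue by contradiction, exploiting that being a \emph{strong} (as opposed to merely $cb$) Morita context forces the identity to have liftings whose norm is arbitrarily close to $1$. Concretely, if $(\mathcal{A},\mathcal{B},X,Y,(\cdot,\cdot),[\cdot,\cdot])$ were a strong Morita context, then the pairing $(\cdot,\cdot)$ would induce a \emph{completely isometric} (complete quotient) isomorphism $(X\otimes_h Y)/\ker \to \mathcal{A}$, which by \cite[Lem.~2.8]{Blecher2000} (with the roles of $X,Y$ reversed, as in the discussion of liftings of $1_\mathcal{A}$) is equivalent to the following: for every $\epsilon>0$ there is an algebraic lift $1_\mathcal{A}=\sum_{i=1}^{k'}(x^i,y^i)$ with $\|(x^1,\dots,x^{k'})\|_{cb}<1+\epsilon$ and $\|(y^1,\dots,y^{k'})^t\|_{cb}<1+\epsilon$. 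This is precisely the point at which a strong context differs from the $cb$ context already established in Theorem \ref{IsACBContext}, where only some fixed constant $C$ is available. Stacking the tuples into a single row $\mathbf{x}=(x^1,\dots,x^{k'})$ and column $\mathbf{y}=(y^1,\dots,y^{k'})^t$, both holomorphic $\mathbb{C}^{nk'}$-valued functions on $\widetilde{D}$, the lifting becomes $\mathbf{x}(\mathfrak{z})\mathbf{y}(\mathfrak{z})=1$ for all $\mathfrak{z}$, with $\sup_{\overline{R}}\|\mathbf{x}\|\le 1+\epsilon$ and $\sup_{\overline{R}}\|\mathbf{y}\|\le 1+\epsilon$, and with the equivariance $\mathbf{x}(\mathfrak{z}\cdot g)=\mathbf{x}(\mathfrak{z})\,\rho(g)^{\oplus k'}$. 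The key observation is that $\rho(a)^{\oplus k'}-I$ is still invertible, since $\rho(a)$ has no eigenvalue $1$.

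Next I would set $u=\|\mathbf{x}\|^2=\sum_j |x_j|^2$. Since each $x_j$ is holomorphic, $u$ is subharmonic with $\Delta u = 4\sum_j|x_j'|^2$, and $u$ descends to $\overline{R}$ because $\|\mathbf{x}(\mathfrak{z})\,\rho(g)^{\oplus k'}\|=\|\mathbf{x}(\mathfrak{z})\|$. The pointwise Cauchy--Schwarz estimate $1=|\mathbf{x}(\mathfrak{z})\mathbf{y}(\mathfrak{z})|\le\|\mathbf{x}(\mathfrak{z})\|\,\|\mathbf{y}(\mathfrak{z})\|\le\|\mathbf{x}(\mathfrak{z})\|(1+\epsilon)$ gives the lower bound $\|\mathbf{x}(\mathfrak{z})\|\ge (1+\epsilon)^{-1}$, so $u$ is trapped: $(1+\epsilon)^{-2}\le u\le(1+\epsilon)^2$ on all of $\widetilde{D}$. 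Thus $u\to 1$ uniformly as $\epsilon\to 0$, and all of $u$'s ``mass'' $\Delta u$ should disappear.

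The heart of the proof is to play two competing estimates against each other on a fixed compact set. For the \emph{lower} bound, fix an interior base point $\mathfrak{z}_0\in D$ and lift the loop $a$ to a path $\tilde\gamma$ from $\mathfrak{z}_0$ to $\mathfrak{z}_0\cdot a$ inside $D$; let $B\Subset D$ be a fixed compact neighborhood of $\tilde\gamma$. Writing $c=\mathrm{dist}(1,\mathrm{spec}\,\rho(a))>0$, the monodromy jump satisfies $\|\mathbf{x}(\mathfrak{z}_0\cdot a)-\mathbf{x}(\mathfrak{z}_0)\|=\|\mathbf{x}(\mathfrak{z}_0)(\rho(a)^{\oplus k'}-I)\|\ge c\,\|\mathbf{x}(\mathfrak{z}_0)\|\ge c/(1+\epsilon)$. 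Expressing this jump as $\int_{\tilde\gamma}\mathbf{x}'\,d\mathfrak{z}$ and bounding $\|\mathbf{x}'\|$ along $\tilde\gamma$ by its area average over $B$ via the sub-mean-value inequality for the subharmonic function $\|\mathbf{x}'\|^2$, I obtain a constant $\delta>0$, depending only on $c$, the length of $\tilde\gamma$, and $B$, with $\int_B\|\mathbf{x}'\|^2\,dA\ge\delta$ for every $\epsilon\le 1$. For the \emph{upper} bound, choose a fixed cutoff $\chi\in C_c^\infty(D)$ with $\chi\equiv 1$ on $B$; then $\int_B\|\mathbf{x}'\|^2\,dA=\tfrac14\int_B\Delta u\le\tfrac14\int\chi\,\Delta u=\tfrac14\int(u-1)\Delta\chi\le\tfrac14\|\Delta\chi\|_{L^1}\,\|u-1\|_{\infty}$, which tends to $0$ since $\|u-1\|_\infty\le(1+\epsilon)^2-1$. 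For $\epsilon$ small enough these two bounds contradict each other, so no strong context can exist.

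The step I expect to be the main obstacle is reconciling the \emph{one-dimensional} monodromy bound (the holonomy forces $\mathbf{x}$ to change by a definite amount along $\tilde\gamma$) with the \emph{two-dimensional} Dirichlet energy that the subharmonic estimate controls; the sub-mean-value inequality is exactly the bridge, and making the constants uniform in the number of summands $k'$ (which varies with $\epsilon$) is what forces the argument to be stated as two fixed inequalities on a fixed $B$ rather than a normal-families limit. A secondary subtlety is pinning down the precise characterization of a strong context so that the $C=1$ normalization of the lift is correctly invoked. It is worth emphasizing that the eigenvalue hypothesis enters \emph{only} through the invertibility and spectral gap of $\rho(a)-I$: this is precisely the obstruction preventing $\mathbf{x}$ from being a near-parallel (monodromy-invariant) holomorphic frame, which a genuine strong equivalence would demand. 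When $\rho(a)$ does have eigenvalue $1$, the corresponding eigenvector produces exactly such a flat section and the obstruction disappears, consistent with Lemma \ref{lem:strongMoritaContext} in the trivial case.
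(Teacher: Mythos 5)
Your proposal is correct, but it takes a genuinely different route from the paper's. Both arguments assume a strong context, extract liftings of the identity with norms $<1+\epsilon$, and play a monodromy lower bound (via invertibility of $I-\rho(a)$) against an upper bound that degenerates as $\epsilon\to 0$; but you lift $1_{\mathcal{A}}$, getting a row $\mathbf{x}$ and column $\mathbf{y}$ with $\mathbf{x}\mathbf{y}=1$, whereas the paper lifts $1_{\mathcal{B}}$, getting matrices $G$, $F$ with $GF=I_n$. The paper's mechanism is then the $C^*$-inequality $0\le (G-F^*)(G^*-F)\le 2\epsilon(2+\epsilon)I$, which makes the holomorphic $G$ uniformly $O(\sqrt{\epsilon})$-close to the antiholomorphic $F^*$, followed by a chain of disks along a lifted path from $\mathfrak{x}_0$ to $a\cdot\mathfrak{x}_0$ on which holomorphic and antiholomorphic Cauchy integral formulas alternate, giving $\|G(\mathfrak{x}_0)-G(a\cdot \mathfrak{x}_0)\|\le (2k-1)\sqrt{2\epsilon(2+\epsilon)}/(2\pi L)$ and a contradiction with $\|G\|\ge (1+\epsilon)^{-1}$. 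Your mechanism is potential-theoretic: the pinching $(1+\epsilon)^{-2}\le\|\mathbf{x}\|^2\le(1+\epsilon)^2$ together with $\Delta\|\mathbf{x}\|^2=4\|\mathbf{x}'\|^2$ and a cutoff integration by parts forces the Dirichlet energy on a fixed compact set to vanish, while the monodromy jump plus the sub-mean-value inequality holds it above a fixed $\delta>0$; your constants (like the paper's $k$, $L$, $M$) are visibly independent of $\epsilon$ and of the length $k'$ of the lift, which you rightly flag as the crux. The paper's route buys the explicit conformal-geometric constants it cares about, directly generalizing \cite[Ex.~8.3]{Blecher2000}; yours avoids both the $C^*$-inequality trick and the antiholomorphic Cauchy formula, needs only the scalar identity lift, and isolates a clean rigidity statement (holomorphic vector-valued functions of nearly constant norm have small Dirichlet energy). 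In writing it up, take $0\le\chi\le 1$ to justify $\int_B\Delta u\le\int\chi\,\Delta u$, use $\int\Delta\chi\,dA=0$ to subtract the constant, and note that $\mathbf{x}$ is holomorphic only on $D$ (continuous on $\widetilde{D}$), which suffices since all estimates live on compact subsets of $D$; also, your closing heuristic about eigenvalue $1$ is only suggestive, since a flat section requires a common fixed vector for all of $\rho(\pi_1(R))$, not just for one $\rho(a)$.
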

	\begin{proof}	
Suppose that $(\mathcal{A},\mathcal{B},X,Y,(\cdot,\cdot),[\cdot,\cdot])$ were a strong Morita context, and so given $\epsilon>0$ there existed $x_1,x_2,\ldots x_m \in X$ and $y_1,y_2,\ldots ,y_m \in Y$ such that $1_\mathcal{B} = \sum_{i=1}^m[y_i,x_i ]$, and where $\| (y_1,y_2,\ldots ,y_m)\|<1+\epsilon$ and $\|(x_1,\dots, x_m)^t \| < 1+\epsilon$. Let $G$ be the $n \times m$ matrix $G=(y_1,y_2,\ldots ,y_m)$ and let $F$ be the $m \times n$ matrix $(x_1,\dots , x_m)^t$. Then $G$ and $F$ are matrices of continuous functions on $\widetilde{D}$ that are analytic on the interior; $G(\mathfrak{z} \cdot g)=\rho(g\inv) G(\mathfrak{z})$ and $F(\mathfrak{z}\cdot g)=F(\mathfrak{z})\rho(g)$ for all $g \in \pi_1(\overline{R})$; $ \| (y_1,y_2,\ldots ,y_m)\| = \sup \{ \|G(\mathfrak{z})\| \mid \, \mathfrak{z} \in \widetilde{D} \}$ and $ \|(x_1,\dots , x_m)^t \| = \sup \{ \|F(\mathfrak{z})\| \mid \, \mathfrak{z} \in \widetilde{D} \}$; and $GF=I_n$. Furthermore, $0 \leq G G^* \leq (1+\epsilon)^2 I$ and $0 \leq F^* F \leq (1+\epsilon)^2 I$, so that $0 \leq (G-F^*)(G^*-F) = GG^* -2I+F^*F \leq 2 \epsilon (2+\epsilon ) I$. This shows, in particular, that  $\| G-F^*\|^2 \leq 2 \epsilon(2+\epsilon)$.

Let $\mathfrak{x}\in D$. The facts that $GF=I$ and $\|F\| \leq 1+\epsilon$ imply on the one hand that $\|G(\mathfrak{x})\| \geq (1+\epsilon)^{-1}$. We shall show, on the other hand, that $\|G(\mathfrak{x})\|$ is bounded above by an expression that goes to zero with $\epsilon$. Together, these are impossible, so $(\mathcal{A},\mathcal{B},X,Y,(\cdot,\cdot),[\cdot,\cdot])$ is not a Morita context.

Following \cite[Ex.~8.3]{Blecher2000}, the main idea of the proof is to estimate the quantity $\|G(\mathfrak{x})\|$. For this purpose, we rewrite $\|G(\mathfrak{x})-\rho(a)G(\mathfrak{x})\|=\| (I-\rho(a))G(\mathfrak{x}) \|$. Applying our technical assumption, $(I-\rho(a))^{-1}$ exists, so we may put $M:=\| (I-\rho(a))^{-1} \|$. Then  $\| G(\mathfrak{x}) \| \leq M \| G(\mathfrak{x})-\rho(a)G(\mathfrak{x}) \|$. The tricky part then comes from estimating $ \| G(\mathfrak{x})-\rho(a)G(\mathfrak{x}) \| $, which was done in \cite[Ex.~8.3]{Blecher2000} using a combination of Cauchy integral formula estimates and the triangle inequality. Our proof follows the same path, but requires more careful choices as we are working with an arbitrary bordered Riemann surface $R$ rather than the annulus.

Let $r_0 \in R$. Pick one representative $\mathfrak{x}_0 \in D$ such that $\pi(\mathfrak{x}_0)=r_0$, where $\pi : \tilde{D} \to \overline{R}$ is the universal covering map. Let $a$ be a loop based at $r_0$ where $\rho(a)$ does not have 1 as an eigenvalue. Identify $a$ with its action $a \in PSL_2(\mathbb{R})$ on $D$. Then the loop $\{a(t) \mid 0 \leq t \leq 1 \}$ lifts to a path $\{ \tilde{a}(t) \mid 0 \leq t \leq 1\} \subset D$ with $\tilde{a}(0)=\mathfrak{x_0}$ and $\tilde{a}(1)=a\cdot \mathfrak{x}_0$. Let $b(t):=\mathfrak{x}_0 (1-t) + a \cdot \mathfrak{x}_0 t$ be the straight line path between $\mathfrak{x}_0$ and $a \cdot \mathfrak{x}_0$.

We may find $r>0$ and $t_i \in [0,1]$ so that we may cover $\{ b(t) \}$ with $k+1 \in \mathbb{N}$ discs $D_r(x_i)$ $i=0, \ldots k$ of fixed radius $r>0$ centered at $x_i:=b(t_i)$ so that the following conditions hold:
	\begin{enumerate}[label=(\roman*)]
	\item We have $D_r(b(t)) \subset D$ for all $t$.
	\item $x_0:=\mathfrak{x}_0$, $x_k:=a \cdot \mathfrak{x}_0$.
	\item \label{secondcond} (a) $D_r(x_{i-1}) \cap D_r(x_i)$ contains a disk of radius $r/3$ centered at a point on $b$, $i=1, \ldots k$; (b) $D_r(x_i) \cap D_r(x_{i+1})$ contains a disk of radius $r/3$ centered at a point on $b$, $i=0, \ldots, k-1$.
	  \item Using \ref{secondcond}, we may pick points $x_i' \in D_r(x_i)\cap D_r(x_{i+1})$, $i=0, \ldots, k-1$ with $|x_i-x_i'|<r$ and $|x_i'-x_{i+_1}|<r$ and so $x_i, x_{i+1} \in D_r(x_i')$
		\end{enumerate}
All conditions are fulfilled if $r$ is less than the distance from the image of $b$ to $\partial D$ and if $k$ is the length of $b$ multiplied by $\dfrac{2}{3r}$ and the $x_i$ are chosen to be equally space along $b$.
	
Let $L=\inf \{ |w-x_i| \mid w \in \partial D_r(x_i'), i=0, \ldots, k-1 \}$. Then $L>0$ because there are finitely many $i$'s. 

We now begin a series of estimates:

\begin{multline*} \|G(x_0) - \rho(a) \cdot G(x_0) \| =  \| G(x_0)-G(a \cdot x_0) \| = \|G(x_0)-G(x_k)\| = \\
\leq \|G(x_0) - F^*(x_0') \| + \|F^*(x_0')-G(x_1)\| + \| G(x_1)-F^*(x_1')\| + \|F^*(x_1') - G(x_2) \| + \ldots \\
+ \| G(x_{x-1})-F^*(x_{k-1}') \| + \|F^*(x_{k-1}')-G(x_k) \|
	\end{multline*}

We will estimate each of the summands above. First, though, recall that there is an antiholomorphic Cauchy integral formula that reads as follows in the scalar case: Let $f$ be holomorphic function. Then 
 \[ \frac{1}{2 \pi i} \int \frac{\overline{f(w)}}{ (w -z)} dw = \overline{f(w_0)} \]
where $w$ runs over a circle centered at $w_0$ and $z$ is any fixed point inside the circle.

Consider the summand $\| G(x_i)-F^*(x_i')\|$:

\begin{multline*} G(x_i)-F^*(x_i') = \dfrac{1}{2\pi i} \int_{\partial D_r(x_i')} \dfrac{G(w)}{w-x_i'} \, dw - \dfrac{1}{2\pi i} \int_{\partial D_r(x_i')} \dfrac{F^*(w)}{w-x_i} \, dw
\\ = \dfrac{1}{2\pi i} \int_{\partial D_r(x_i')} \dfrac{G(w)-F^*(w)}{w-x_i}
	\end{multline*}
The first equality above comes from applying the Cauchy integral formula to $G$ (integrating around an `arbitrary curve' around $x_i$), and applying the antiholomorphic Cauchy integral formula to $F^*$ (integrating around a circle around $x_i'$, where the denominator of the integrand is the difference between $w$ and any point inside this circle).

Therefore, 
\begin{multline*} \| G(x_i)-F^*(x_i') \| \leq \dfrac{1}{2\pi} \sup_{w \in \partial D_r(x_i')} \dfrac{\| G(w)-F^*(w) \|}{\|w-x_i\|} \leq \dfrac{1}{2\pi L} \|G-F^*\| \leq \dfrac{\sqrt{2\epsilon(2+\epsilon)}}{2\pi L}
	\end{multline*}

Similarly, we may estimate $\|F^*(x_i')-G(x_{i+1})\|$:

\begin{multline*} F^*(x_i')-G(x_{i+1}) = \dfrac{1}{2\pi i} \int_{\partial D_r(x_i')} \dfrac{F^*(w)}{w-x_{i+1}} \, dw - \dfrac{1}{2\pi i} \int_{\partial D_r(x_i')} \dfrac{G(w)}{w-x_{i+1}} \, dw \\ = \dfrac{1}{2\pi i} \int_{\partial D_r(x_i')} \dfrac{F^*(w)-G(w)}{w-x_{i+1}} \, dw
	\end{multline*}
Where the first equality comes from applying the antiholomorphic Cauchy integral formula to $F^*$, and the Cauchy integral formula to $G$.

Therefore, we also have
\begin{equation*} \| F^*(x_i')-G(x_{i+1}) \| \leq \dfrac{\sqrt{2 \epsilon (2 + \epsilon)}}{2 \pi L}
	\end{equation*}

Thus we conclude

\begin{equation*} \|G(x_0)-G(x_k)\|
\leq (2k-1) \dfrac{\sqrt{2 \epsilon (2 + \epsilon)}}{2 \pi L}
	\end{equation*}

The numbers $k$ and $L$ come from properties of $R$, and thus are independent of $\epsilon$. We get, then, that $\|G(\mathfrak{x})\| \leq M \|G(\mathfrak{x})-\rho(a)G(\mathfrak{x})\| \leq M (2k-1) \dfrac{\sqrt{2 \epsilon (2 + \epsilon)}}{2 \pi L}$, which goes to $0$ with $\epsilon$. This contradicts our original estimate that $\|G(\mathfrak{x}) \| \geq \dfrac{1}{1+\epsilon}$, thus $(\mathcal{A},\mathcal{B},X,Y,(\cdot,\cdot), [\cdot,\cdot])$ is not a strong Morita context.

    \end{proof}

\noindent {\it Acknowledgements:} The author would like to thank Paul Muhly and Vern Paulsen, for helpful conversations and insights on their work; and Adam Dor-On, for some comments that improved the introduction.

\bibliographystyle{plain}
\bibliography{biblio12-27-19}

\def\cprime{$'$}
\begin{thebibliography}{10}

\bibitem{AnHuef2007}
Astrid {An Huef}, Iain Raeburn, and Dana~P. Williams.
\newblock {Properties preserved under {M}orita equivalence of
  {$C^*$}-algebras}.
\newblock {\em Proc. Amer. Math. Soc.}, 135(5):1495--1503 (electronic), 2007.

\bibitem{Blecher2002}
David Blecher and Krzysztof Jarosz.
\newblock {Isomorphisms of function modules, and generalized approximation in
  modulus}.
\newblock {\em Trans. Amer. Math. Soc.}, 354(9):3663--3701 (electronic), 2002.

\bibitem{Blecher1999}
David~P. Blecher.
\newblock {Modules over operator algebras, and the maximal
  {$C^\ast$}-dilation}.
\newblock {\em J. Funct. Anal.}, 169(1):251--288, 1999.

\bibitem{Blecher2004d}
David~P. Blecher and Christian {Le Merdy}.
\newblock {\em {Operator algebras and their modules - An operator space
  approach}}, volume~30 of {\em {London Mathematical Society Monographs. New
  Series}}.
\newblock The Clarendon Press Oxford University Press, Oxford, 2004.
\newblock Oxford Science Publications.

\bibitem{Blecher1999a}
David~P. Blecher, Paul~S. Muhly, and Qiyuan Na.
\newblock {Morita equivalence of operator algebras and their
  {$C^*$}-envelopes}.
\newblock {\em Bull. London Math. Soc.}, 31(5):581--591, 1999.

\bibitem{Blecher2000}
David~P. Blecher, Paul~S. Muhly, and Vern~I. Paulsen.
\newblock {Categories of operator modules ({M}orita equivalence and projective
  modules)}.
\newblock {\em Mem. Amer. Math. Soc.}, 143(681):viii--94, 2000.

\bibitem{Brown1977b}
Lawrence~G. Brown.
\newblock Stable isomorphism of hereditary subalgebras of {$C\sp*$}-algebras.
\newblock {\em Pacific J. Math.}, 71(2):335--348, 1977.

\bibitem{BrownGreenRieffel1977}
Lawrence~G. Brown, Philip Green, and Marc~A. Rieffel.
\newblock Stable isomorphism and strong {M}orita equivalence of
  {$C\sp*$}-algebras.
\newblock {\em Pacific J. Math.}, 71(2):349--363, 1977.

\bibitem{Clouatre2015}
Rapha\"{e}l Clou\^{a}tre.
\newblock Completely bounded isomorphisms of operator algebras and similarity
  to complete isometries.
\newblock {\em Indiana Univ. Math. J.}, 64(3):825--846, 2015.

\bibitem{DixmierDouady}
J.~Dixmier and A.~Douady.
\newblock {Champs continus d'espaces hilbertiens et de {$C^{\ast}
  $}-alg\'{e}bres}.
\newblock {\em Bull. Soc. Math. France}, 91:227--284, 1963.

\bibitem{Elef2016}
G.~K. Eleftherakis.
\newblock Stable isomorphism and strong {M}orita equivalence of operator
  algebras.
\newblock {\em Houston J. Math.}, 42(4):1245--1266, 2016.

\bibitem{Forster2012}
O.~Forster.
\newblock {\em {Lectures on Reimann Surfaces}}.
\newblock {Graduate Texts in Mathematics}. Springer New York, 2012.
\newblock Transl. by B. Gilligan.

\bibitem{GriesenauerMuhlySolel2018}
Erin Griesenauer, Paul~S. Muhly, and Baruch Solel.
\newblock Boundaries, bundles, and trace algebras.
\newblock {\em New York J. Math.}, 24A:136--154, 2018.

\bibitem{Husemoller1994}
Dale Husemoller.
\newblock {\em {Fibre bundles}}, volume~20 of {\em {Graduate Texts in
  Mathematics}}.
\newblock Springer-Verlag, New York, third edition, 1994.

\bibitem{Kaplansky1968}
Irving Kaplansky.
\newblock {\em Rings of operators}.
\newblock W. A. Benjamin, Inc., New York-Amsterdam, 1968.

\bibitem{KodakaTeruya2018}
Kazunori Kodaka and Tamotsu Teruya.
\newblock The strong {M}orita equivalence for inclusions of
  {$C^{\ast}$}-algebras and conditional expectations for equivalence bimodules.
\newblock {\em J. Aust. Math. Soc.}, 105(1):103--144, 2018.

\bibitem{LeschMoscovici2016}
Matthias Lesch and Henri Moscovici.
\newblock Modular curvature and {M}orita equivalence.
\newblock {\em Geom. Funct. Anal.}, 26(3):818--873, 2016.

\bibitem{McCormickThesis}
Kathryn McCormick.
\newblock {\em Operator algebras, matrix bundles, and {R}iemann surfaces}.
\newblock PhD thesis, University of Iowa, August 2018.

\bibitem{McCormick2019}
Kathryn McCormick.
\newblock Matrix bundles and operator algebras over a finitely bordered
  {R}iemann surface.
\newblock {\em Complex Anal. Oper. Theory}, 13(3):659--671, 2019.

\bibitem{Paulsen2002}
Vern Paulsen.
\newblock {\em {Completely bounded maps and operator algebras}}, volume~78 of
  {\em {Cambridge Studies in Advanced Mathematics}}.
\newblock Cambridge University Press, Cambridge, 2002.

\bibitem{Raeburn1998}
Iain Raeburn and Dana~P. Williams.
\newblock {\em {Morita equivalence and continuous-trace {$C^*$}-algebras}},
  volume~60 of {\em {Mathematical Surveys and Monographs}}.
\newblock American Mathematical Society, Providence, RI, 1998.

\bibitem{R1974a}
{M}arc~A. Rieffel.
\newblock {Morita equivalence for {$C^{\ast} $}-algebras and {$W^{\ast}
  $}-algebras}.
\newblock {\em J. Pure Appl. Algebra}, 5:51--96, 1974.

\bibitem{Rieffel1976}
Marc~A Rieffel.
\newblock Strong {M}orita equivalence of certain transformation group
  {$C\sp*$}-algebras.
\newblock {\em Math. Ann.}, 222(1):7--22, 1976.

\bibitem{Rieffel1982}
Marc~A. Rieffel.
\newblock Applications of strong {M}orita equivalence to transformation group
  {$C^{\ast} $}-algebras.
\newblock In {\em Operator algebras and applications, {P}art {I} ({K}ingston,
  {O}nt., 1980)}, volume~38 of {\em Proc. Sympos. Pure Math.}, pages 299--310.
  Amer. Math. Soc., Providence, R.I., 1982.

\bibitem{Tomiyama1961}
Jun Tomiyama and Masamichi Takesaki.
\newblock {Applications of fibre bundles to the certain class of {$C^{\ast}
  $}-algebras}.
\newblock {\em Tohoku Math. J. (2)}, 13:498--522, 1961.

\bibitem{Widom1971}
Harold Widom.
\newblock {{$H_{p}$} sections of vector bundles over {R}iemann surfaces}.
\newblock {\em Ann. of Math. (2)}, 94:304--324, 1971.

\end{thebibliography}

	\end{document}